\let\ucal=\mathcal
\newtheorem{pro}{Proposition} 
\newtheorem{lem}[pro]{Lemma}
\newtheorem{thm}[pro]{Theorem}
\newtheorem{corl}[pro]{Corollary}
\theoremstyle{remark}
\newtheorem{rem}[pro]{Remark}
\theoremstyle{definition}
\newtheorem{deff}[pro]{Definition}
\numberwithin{equation}{subsubsection}
\def\toRomans{
  \renewcommand{\theenumi}{(\roman{enumi})}}
\title{Unbounded Fredholm modules and double operator integrals}
\author{D.~Potapov \and F.~Sukochev}
\def\aA{{\mathcal{A}}}
\def\aMtilde{{\tilde \aM}}
\def\Bd{B}
\def\cC{{\ucal{C}}}
\def\aM{{\mathcal{M}}}
\def\sL{{\mathcal{L}}}
\def\Z{{\mathbb{Z}}}
\def\sE{{\mathcal{E}}}
\def\sH{{\mathcal{H}}}
\def\Rl{{\mathbb{R}}}
\def\Cx{{\mathbb{C}}}
\def\dom#1{{\mathcal{D}{\rm om}~#1}}
\def\dtimes{{\times\kern-1pt\times}}
\def\sEcross{{\sE^\times}}
\let\tilde=\widetilde
\def\Dom#1{{\mathscr{D}(#1)}}
\begin{document}

\maketitle

\bibliographystyle{short}

\parskip=0pt plus 1pt minus 0pt

\begin{abstract}
  In noncommutative geometry one is interested in invariants such as the
  Fredholm index or spectral flow and their calculation using cyclic
  cocycles.  A variety of formulae have been established under side
  conditions called summability constraints. These can be formulated in
  two ways, either for spectral triples or for bounded Fredholm modules.
  We study the relationship between these by proving various properties
  of the map on unbounded self adjoint operators $D$ given by $f
  (D)=D(1+D^2)^{-1/2}$. In particular we prove commutator estimates
  which are needed for the bounded case.  In fact our methods work in
  the setting of semifinite noncommutative geometry where one has $D$ as
  an unbounded self adjoint linear operator affiliated with a
  semi-finite von Neumann algebra $\aM$.  More precisely we show that
  for a pair $D,D_0$ of such operators with $D-D_0$ a bounded
  self-adjoint linear operator from $\aM$ and $ ({\bf
    1}+D_0^2)^{-1/2}\in \sE$, where $\sE$ is a noncommutative symmetric
  space associated with $\aM$, then $$ \Vert f(D) - f (D_0) \Vert_{\sE}
  \leq C\cdot \Vert D-D_0\Vert_{\aM}. $$ This result is further used to
  show continuous differentiability of the mapping between an odd
  $\sE$-summable spectral triple and its bounded counterpart.
\end{abstract}

\subsubsection{Introduction}

\def\sgn{{\mathrm{sgn}\,}}

This paper concerns questions arising in noncommutative geometry in
general and the study of spectral flow in particular. The basic issues
were first exposed in A.~Connes~\cite{Connes1985-MR823176,
  Connes1994-MR1303779}.  The object of study is a spectral triple
which consists of a separable Hilbert space $\sH$, a densely defined
unbounded self-adjoint linear operator $D$ and a *-algebra of
bounded operators on $\sH$ such that $[D,a]$ extends to a bounded
operator on $\sH$ for all $a\in \aA$. If there is a grading operator
$\gamma$ (that is $\gamma$ is self adjoint and $\gamma^2=1$) which
anticommutes with $D$ then the spectral triple is said to be even and
otherwise it is odd. As the grading operator will not play a role in
what we do here we will ignore it in the sequel. We note however that
spectral flow, which will form a major application of our results, is
non-trivial only in the odd case.

In order to construct formulae for spectral flow or the Fredholm index
one employs explicit cyclic cocycles whose existence requires
`summability conditions' on $D$. These take the form of specifying
some symmetrically normed ideal $\sE$ of compact operators on $\sH$
and requiring $(1+D^2)^{-1/2}\in \sE$. In \cite{Connes1994-MR1303779}
three cases arise naturally namely the Schatten ideals $\sL^p$ (the
$p$-summable case), the ideal $Li$ which is relevant to so-called
theta summable spectral triples and the ideal $\sL^{p,\infty}$ which is
naturally associated to the Dixmier trace.

In constructing formulae for cyclic cocycles one is faced with deciding
when a given cocycle is in the cohomology class of the Chern character
\cite{Connes1994-MR1303779}. This Chern character is defined not for
spectral triples but for bounded Fredholm modules for $\aA$.
The passage from unbounded to bounded requires us to study the map
$D\to F_D=D(1+D^2)^{-1/2}$. The definition of an
$\sE$-summable bounded Fredholm module
requires the commutator $[F_D,a]$ to be in $\sE$ so that we want to know
when this follows from the assumption $(1+D^2)^{-1/2}\in \sE$.
This explains the need for methods to prove
commutator estimates which generalize earlier work.


In the setting of Schatten-von Neumann ideals, it was established
in~\cite{CarPhi1998-MR1638603} (respectively,
in~\cite{Sukochev2000-MR1767406}) that if~$(1+D_0^2)^{- \frac 12} \in
\sL^q$, $q < p$ (respectively,~$(1+D^2)^{-\frac 12} \in Li^\beta$,
$\beta > \alpha$), then we have the Lipschitz estimates in~$\sL^p$
(respectively~$Li^\alpha$).  The sharp commutator estimate in this
setting was proved in~\cite{ScWaWa1998}.

Beginning in~\cite{CarPhi1998-MR1638603} and continuing
in~\cite{CaPhSu2000-MR1758245,CaPhSu2003-MR1954456,
  BenFac2006-MR2186918, CarPhi2004-MR2053481, CaPhRe2004-MR2069783,
  CPRS2, CPRS3, CaPhReSu-III, BCPRSW} an extension is
made to the framework of noncommutative geometry described
in~\cite{Connes1994-MR1303779}. This extension is to the situation where
we take $\aM$ be a semi-finite von Neumann algebra acting on $\sH$ with
normal self-adjoint faithful trace~$\tau$, we let~$D$ be an unbounded
self-adjoint linear operator affiliated with~$\aM$ and take $\sE$ to be
a noncommutative symmetric space of~$\tau$-measurable operators (all
these notions are explained in the next Section).  Summability in this
setting means $(1+D^2)^{-1/2}\in \sE$ however now the passage from the
unbounded to bounded picture is a much more complex issue.  Our
systematic approach to these questions results in a general approach
which we illustrate in Theorems 11 and 18.  We establish in particular
that for the case of a general semifinite von Neumann algebra from the
condition $(1+D^2)^{-1/2}\in \sE$ there follows the bound $$ \| [ F_D ,
a] \|_\sE \leq c\, \|[D, a]\|. $$

A related question arises in~\cite{Phillips1997-MR1478707,
  CarPhi1998-MR1638603, CarPhi2004-MR2053481} where the notion of
spectral flow along a path of unbounded self adjoint operators
affiliated to $\aM$ is studied and analytic formulae to calculate
spectral flow are given. In~\cite{CPRS2} and~\cite{CPRS3} a local
index formula for spectral flow (analogous to the formula of
Connes-Moscovici~\cite{ConMos1995-MR1334867} for the case where $\aM$
is the bounded operators on $\sH$) is given and its relation to the
Chern character of a `Breuer-Fredholm module' studied.  In all of this
work the properties of the function
\begin{equation}
  \label{MainFuncIntro}
  f(t) = \frac t {(1+ t^2)^{\frac 12}},\ \ t \in
  \Rl
\end{equation}
defined on unbounded self adjoint Breuer-Fredholm operators plays an
essential role.  In particular the question of operator
differentiability of $f$ arises.  Until now results about this
function have been proved in an ad hoc fashion and are restricted to
particular choices of the ideal $\sE$. More general ideals do need to
be studied as they arise in a very natural way once one begins a
deeper study of noncommutative geometry in this setting, see for
example~\cite{CaReSeSu}.

The principal result of our paper in this direction is that if~$(\aM,
D)$ is an odd $\sE$-summable semifinite spectral triple, then~$(\aM, f
(D))$ is an odd bounded $\sE$-summable (pre-)Breuer-Fredholm module and
furthermore the mapping $$ (\aM, D_0) \mapsto (\aM, f(D)) $$ is
Lipschitz continuous and continuously differentiable on the affine space
of bounded self adjoint perturbations of $D$ (where the perturbation
comes from $\aM$). The need for such a result is noted in \cite{Wahl}.

Consider the following example.  Let~$\sH = L^2[0,1]$ and let~$\aM
= \Bd(\sH)$ (i.e., the algebra of all bounded linear operators
on~$\sH$). Consider the operator~$D_0 = i \frac d {dt}$ with~$\dom
D_0$ given by the class of all absolutely continuous
functions~$\xi$ on~$[0,1]$ such that~$\xi(0) = \xi(1)$.  It is
well-known that~$\sigma(D_0) = \Z$ and $$ \left( 1 +
D_0^2\right)^{-\frac 12} \in \cC^{1, \infty}, $$ where~$\cC^{1,
\infty}$ is the weak~$L^1$ ideal (see Section~\ref{sec:AppWeakLp}
below).  Taking a path of operators $$ s \mapsto D_s = i \frac
d{dt} + V(t, s), $$ where the path of potentials~$s \mapsto
V(t,s)$ is continuously differentiable in~$L^\infty[0,1]$,
Theorem~\ref{CiPATH} implies that the path of operators~$s \mapsto
f(D_s)$ is continuously differentiable in~$\cC^{1,
  \infty}$.

Partial results of this nature were earlier obtained
in~\cite{CarPhi1998-MR1638603, CarPhi2004-MR2053481, ScWaWa1998,
  Sukochev2000-MR1767406, CaPhSu2000-MR1758245} however the methods
employed in those papers are not adequate to determine the sense in
which this mapping $f$ is smooth on operators.  This suggested to the
authors that there was a need for a more powerful method that could
answer this question in full generality.  The technique we describe
here is partly based on an approach to the calculus of functions of
operators known as the theory of `double operator integrals'. It has
only recently been developed for the general semi-finite von Neumann
algebras in~\cite{PS-DiffP, PSW-DOI} and its applications to Lipschitz
and commutator estimates of operator functions begun
in~\cite{PS-RFlow,PotapovThesis,PoSu}.

The organization of the paper is as follows.  We briefly outline the
theory of double operator integrals in Section~\ref{sec:DOI} where we
also prove a number of technical results needed to analyze the
behavior of the operator function~$f$ in our context.
Section~\ref{BasicSec} contains the main results concerning Lipschitz
and commutator estimates for the function~$f$ (and some other operator
functions) which occur in noncommutative geometry and the theory of
spectral flow.  Section~\ref{sec:AppWeakLp} contains a specialization
of the results given in Section~\ref{BasicSec} to important
applications to the case of weak $L_p$-spaces and answers a question
raised by A.~Carey in the context of applications to spectral flow.
Finally, the last section~\ref{FredholmSec} explains how results
presented in Section~\ref{BasicSec} can be further refined to prove
the differentiability of the mapping~$(\aM, D_0) \mapsto (\aM,
f(D_0))$.  We also indicate there important implications for the theory
of spectral flow which are motivated by the papers ~\cite{BCPRSW,
  CarPhi1998-MR1638603, CarPhi2004-MR2053481, Wahl}. The
strategy of our proof is straightforward and applies equally well to
von Neumann algebras of type~$I$ and~$II$.

\subsubsection{Preliminaries}
\label{PrelimSec}

Let~$\aM$ be a semi-finite von Neumann algebra acting on a Hilbert
space~$\sH$ and equipped with normal semi-finite faithful
trace~$\tau$. The identity in $\mathcal {M}$ is denoted by
$1$ and~$\|\cdot\|$ stands for the operator norm.  An
operator~$D: \Dom{D} \mapsto \sH$, with domain $\Dom{D}\subseteq
\mathcal {H}$, is called {\it affiliated\/} with~$\aM$ if and only if,
for every unitary~$u \in \aM'$, $u^* D u = D$, i.e.\ (i)~$u(\Dom{D})
\subseteq \Dom{D}$ and (ii)~$u^* D u(\xi) = D(\xi)$, for every~$\xi
\in \Dom{D}$. We use the standard notation $D \eta \aM$ to indicate
that the operator~$D$ is affiliated with~$\aM$. A closed and densely
defined operator $D\eta \aM$ is called $\tau$-measurable if for every
$\epsilon >0$ there exists an orthogonal projection $p\in \aM$ such
the $p({\mathcal {H}})\subseteq \Dom{D}$ and $\tau(1-p)<\epsilon$. The
set of all $\tau$-measurable operators is denoted $\aMtilde$.  The
set~$\aMtilde$ is a $*$-algebra (with respect to strong multiplication
and addition) complete in the measure topology.  We refer the reader
to~\cite{SZ-Lec-on-vNA, FK1986, Nelson1974-MR0355628} for more
details.

We recall from~\cite{FK1986} the notion of {\it generalized singular
  value function}. Given a self-adjoint operator $T$ in $\aMtilde$, we
denote by $E^T(\cdot )$ the spectral measure of $T$. For every~$T \in
\aMtilde$, $E^{\vert T\vert}(B)\in \aM $ for all Borel sets
$B\subseteq{\mathbb{R}}$, and there exists $s>0$ such that $\tau
(E^{\vert T\vert}(s,\infty))<\infty$. For $t\geq 0$, we define
$$\mu_t(T)=\inf\{s\geq 0 : \tau (E^{\vert T\vert}(s,\infty))\leq t\}.$$
The function $\mu(T):[0,\infty)\to [0,\infty ]$ is called the {\it
generalized singular value function} (or decreasing rearrangement)
of $T$; note that $\mu_{(\cdot)}(T)\in L_\infty$ if and only if $T\in \aM$.

Throughout the text let~$E=E(0, \infty)$ be a symmetric Banach function
space, i.e.\ $E=E(0, \infty)$ is a rearrangement invariant Banach
function space on~$[0, \infty)$ (see~\cite{LT-II}).
We use the notation ~$g\prec \prec f$ to denote
submajorization in the sense of
Hardy, Littlewood and Polya, i.e.\ $$ \int_0^t \mu_s(g)\, ds \le
\int_0^t \mu_s(f) \, ds,\ \ t > 0.  $$
We will always require $E$ to have the additional
property that~$f, g\in E$ and~$g \prec\prec f$ implies that~$\|g\|_E \le
\|f\|_E$.

There is associated to each such space  $E$ a noncommutative symmetric
space~$\sE = E(\aM, \tau)$ defined by $$ \sE = \{ T \in\aMtilde,\ \
\mu_{(\cdot)}(T) \in E\}\ \ \text{with}\ \ \|T\|_\sE =
\|\mu_{(\cdot)}(T)\|_E. $$ If~$E = L^p$, $1\le p\le \infty$,
then~$\sL^p$ is the noncommutative~$L^p$-space.  For the sake of
brevity, we shall denote the norm in the space~$\sL^p$ by~$\|\cdot\|_p$.
Note, that the spaces~$\sL^\infty$ and~$\sL^1$ coincide with the
algebra~$\aM$ and the predual~$\aM_*$, respectively, and
that~$\|\cdot\|_\infty$ is the operator norm~$\|\cdot\|$.  We refer the
reader to~\cite{PSW-DOI, DoDode1989-MR1004176,DoDode1992-MR1188788} for
more information on noncommutative symmetric spaces.

The K\"othe dual~$\sEcross$ of a symmetric space~$\sE$ is the symmetric
space given by $$ \sEcross = \{ T\in\aMtilde:\ \ TS\in\sL^1,\ \
\text{whenever}\ \ S\in\sE $$ and
\begin{equation*}
  \label{eq:Knorm}
  \|T\|_\sEcross := \sup_{S\in\sL^1\cap\sL^\infty, \|S\|_\sE \le 1}
  \tau(TS) < \infty \},
\end{equation*}
see, for example,~\cite{DDP1993}.  It is a subspace of the dual
space~$\sE^*$ (the norms~$\|\cdot\|_{\sEcross}$ and~$\|\cdot\|_{\sE^*}$
coincide on~$\sEcross$) and~$\sEcross = \sE^*$ if and only if the
space~$E$ is separable.  It is known that~$(\sL^p)^\times$, $1\leq p
\leq \infty$ coincides with~$\sL^{p'}$, where~$p'$ is the conjugate
exponent, i.e.\ $p^{-1} + p'^{-1} = 1$.

In the present text, we prove a number of results concerning
perturbation and commutator estimates in noncommutative symmetric
spaces which are relevant to noncommutative geometry.  In this
context, the main interest lies with symmetric spaces~$E \subseteq
L^\infty(0, \infty)$, that is~$\sE$ can be thought of as a unitarily
invariant ideal of~$\aM$ equipped with a unitarily invariant norm.
If~$\aM$ is a type~$I$ factor, then such ideals are customarily called
symmetrically normed ideals (of compact operators), see
e.g.~\cite{GohbergKrein}.

Let~$D_0, D_1 \eta \aM$ be self-adjoint linear operators and let~$a
\in \aM$.  We adopt the following definition,
see~\cite{BraRob1987-MR887100} (see also~\cite{PoSu}).  We shall say
that the operator~$D_0a - aD_1$ is well defined and bounded
(equivalently~$D_0 a - a D_1 \in \sL^\infty$) if and only if
(i)~$a(\Dom{D_1}) \subseteq \Dom{D_0}$; (ii)~the operator~$D_0 a - a
D_1$, initially defined on~$\Dom{D_1}$, is closable; (iii)~the
closure~$\overline{D_0 a - a D_1}$ is bounded.  In this case, the
symbol~$D_0 a - a D_1$ also stands for the closure~$\overline{D_0 a -
  a D_1}$.  In the special case~$D_0 = D_1 = D$, we shall write~$[D,
a] \in \sL^\infty$.

\subsubsection{Double Operator Integrals}
\label{sec:DOI}

Let~$X, Y$ be a normed spaces.  Recall that~$\Bd(X, Y)$ stands for the
normed space of all bounded linear operators~$T: X \mapsto Y$.  If~$X =
Y$, then we shall write~$\Bd(X)$.

Throughout this paper we will let $D_0, D_1$ denote self-adjoint
unbounded operators
affiliated with~$\aM$ and let~$dE_\lambda^0$, $dE_\mu^1$ be the
corresponding spectral measures.  Recall that $$ \tau(x\,
dE_\lambda^0\, y\, dE^1_\mu),\ \ \lambda, \mu \in \Rl $$ is a
$\sigma$-additive complex-valued measure on the plane~$\Rl^2$ with the
total variation bounded by~$\|x\|_{2} \|y\|_{2}$, for every~$x, y \in
\sL^2$, see~\cite[Remark~3.1]{PSW-DOI}.

Let~$\phi = \phi(\lambda, \mu)$ be a bounded Borel function
on~$\Rl^2$.  We call the function~$\phi$ {\it $dE^0 \otimes
  dE^1$-integrable\/} in the space~$\sE$, $1\leq p \leq \infty$ if
and only if there is a linear operator~$T_\phi = T_\phi(D_0, D_1) \in
B(\sE)$ such that
\begin{equation}
  \label{DOIdefFmla}
  \tau (x\, T_\phi(y)) = \int_{\Rl^2}
  \phi(\lambda, \mu)\, \tau(x\, dE_\lambda^0 \, y \, dE_\mu^1),
\end{equation}
for every $$
x \in \sL^2 \cap \sEcross \ \ \text{and}\ \ y \in \sL^2
\cap \sE. $$
If the operator~$T_\phi(D_0, D_1)$ exists, then it is
unique,~\cite[Definition~2.9]{PSW-DOI}.  The latter definition is in
fact a special case of~\cite[Definition~2.9]{PSW-DOI}.  See
also~\cite[Proposition~2.12]{PSW-DOI} and the discussion there on
pages~81--82.  The operator~$T_\phi$ is called {\it the Double Operator
  Integral.}

We shall write~$\phi \in \Phi(\sE)$ if and only if the function~$\phi$
is $dE^0 \otimes dE^1$-integrable in the space~$\sE$ for any
measures~$dE^0$ and~$dE^1$.  The following result is used throughout the
text.

\begin{thm}[{\cite{PSW-DOI, PS-DiffP}}]
  \label{HomomorphismResult}
  Let~$D_0, D_1 \eta \aM$.  The mapping
  $$ \phi \mapsto T_\phi = T_\phi(D_0, D_1) \in \Bd(\sE),\ \ \phi
  \in \Phi(\sE) $$ is a $*$-homomorphism.  Moreover,
  if~$\phi(\lambda, \mu) = \alpha(\lambda)$ (resp.\ $\phi(\lambda,
  \mu) = \beta(\mu)$), $\lambda, \mu \in \Rl$, then $$ T_\phi(x) =
  \alpha(D_0)\, x\ \ (resp.\ T_\phi(x) = x\, \beta(D_1)),\ \ x \in
  \sE, $$ where~$\alpha, \beta: \Rl \mapsto \Cx$ are bounded Borel
  functions.
\end{thm}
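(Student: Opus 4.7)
The approach is to build up from the ``Moreover'' clause and the easy algebraic properties to the central multiplicativity claim, working throughout directly from the defining identity~\eqref{DOIdefFmla}.

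I would first settle the ``Moreover'' clause. For $\phi(\lambda,\mu) = \alpha(\lambda)$, using the finite total variation of the complex measure $\tau(x\,dE^0_\lambda\,y\,dE^1_\mu)$ on $\Rl^2$ noted earlier in the excerpt, together with $E^1(\Rl)=\id$, Fubini gives
$$\int_{\Rl^2}\alpha(\lambda)\,\tau(x\,dE^0_\lambda\,y\,dE^1_\mu) = \int_\Rl\alpha(\lambda)\,\tau(x\,dE^0_\lambda\,y) = \tau\bigl(x\,\alpha(D_0)\,y\bigr),$$
for all $x\in\sL^2\cap\sEcross$ and $y\in\sL^2\cap\sE$. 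Since left multiplication by $\alpha(D_0)\in\aM$ is bounded on $\sE$, uniqueness in the definition of the double operator integral forces $T_\phi(y)=\alpha(D_0)\,y$; the case $\phi(\lambda,\mu)=\beta(\mu)$ is symmetric. Linearity of $\phi\mapsto T_\phi$ is then immediate from linearity of the right hand side of \eqref{DOIdefFmla} in $\phi$ together with uniqueness, and preservation of the involution follows by conjugating \eqref{DOIdefFmla} and applying $\overline{\tau(AB)}=\tau(B^*A^*)$.

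The substantive content is multiplicativity $T_{\phi_1\phi_2}=T_{\phi_1}T_{\phi_2}$. I would first establish this on rank-one symbols $\phi=\alpha\otimes\beta$: combining the two special cases of the ``Moreover'' part yields $T_{\alpha\otimes\beta}(y)=\alpha(D_0)\,y\,\beta(D_1)$, and then
$$T_{\alpha_1\otimes\beta_1}\bigl(T_{\alpha_2\otimes\beta_2}(y)\bigr) = (\alpha_1\alpha_2)(D_0)\,y\,(\beta_1\beta_2)(D_1) = T_{(\alpha_1\alpha_2)\otimes(\beta_1\beta_2)}(y)$$
by the functional calculus. Since $(\alpha_1\otimes\beta_1)(\alpha_2\otimes\beta_2)=(\alpha_1\alpha_2)\otimes(\beta_1\beta_2)$, this is exactly multiplicativity on elementary tensors, and by linearity it extends to their linear span.

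The main obstacle is pushing this identity from the span of elementary tensors to arbitrary symbols in $\Phi(\sE)$, whose elements need not admit any tensor decomposition. The standard strategy, as developed in~\cite{PSW-DOI}, is to equip $\Phi(\sE)$ with a Birman--Solomyak--Peller type integral projective norm which dominates $\|T_\phi\|_{\Bd(\sE)}$, with respect to which the linear span of elementary tensors is dense and pointwise multiplication of symbols is jointly continuous. The identity $T_{\phi_1\phi_2}=T_{\phi_1}T_{\phi_2}$ then passes to the limit, and one simultaneously obtains closure of $\Phi(\sE)$ under products. Constructing this norm and verifying its compatibility with both the pointwise product on the symbol side and composition on the operator side is the technical heart of the argument.
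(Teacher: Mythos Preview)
The paper does not give its own proof of this theorem: it is stated with the citation \texttt{[PSW-DOI, PS-DiffP]} and used as a black box, with no argument supplied in the text. So there is no in-paper proof to compare your proposal against; your write-up is a reconstruction of what a proof might look like, not a comparison target.

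As to the proposal itself, the treatment of the ``Moreover'' clause, linearity, and the $*$-property is fine, and multiplicativity on elementary tensors is indeed trivial. The final paragraph, however, contains a genuine conceptual slip. The class $\Phi(\sE)$ is defined in the paper as the set of \emph{all} bounded Borel $\phi$ for which $T_\phi\in\Bd(\sE)$ for every pair of spectral measures; this is a Schur-multiplier type class and there is no reason to expect the span of elementary tensors to be dense in it for any reasonable norm. The integral-projective norm you describe is exactly the $\mathfrak{A}_0$-norm of the paper, and Corollary~\ref{AzeroClass} only gives the inclusion $\mathfrak{A}_0\subseteq\Phi(\sE)$, which can be strict. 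So your proposed route would at best prove the homomorphism property on $\mathfrak{A}_0$, not on $\Phi(\sE)$. The arguments in the cited references proceed instead by first working in $\sL^2$, where the bilinear form $\tau(x\,dE^0_\lambda\,y\,dE^1_\mu)$ gives a genuine spectral-measure structure and multiplicativity follows from the spectral theorem directly (no tensor approximation needed), and then transferring to $\sE$ by duality and interpolation; closure of $\Phi(\sE)$ under products is obtained as a consequence rather than as an input.
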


The latter result allows the construction of  a sufficiently large class of
functions in~$\Phi(\sE)$, $1\leq p \leq \infty$.  Indeed, let us
consider the class~${\mathfrak{A}_0}$ which consists of all bounded
Borel functions~$\phi(\lambda, \mu)$, $\lambda, \mu \in \Rl$ admitting
the representation
\begin{equation}
  \label{AnotReps}
  \phi(\lambda, \mu) = \int_S \alpha_s(\lambda)\,
  \beta_s(\mu)\, d\nu(s)
\end{equation}
such that $$ \int_S \|\alpha_s\|_\infty \, \|\beta_s\|_\infty \,
d\nu(s) < \infty, $$ where~$(S, d\nu)$ is a measure space, $\alpha_s,
\beta_s:\Rl \mapsto \Cx$ are bounded Borel functions, for every~$s \in
S$ and~$\|\cdot\|_\infty$ is the uniform norm.  The
space~$\mathfrak{A}_0$ is endowed with the norm $$
\|\phi\|_{\mathfrak{A}_0} := \inf \int_S \|\alpha_s\|_\infty \,
\|\beta_s\|_\infty \, d\nu(s), $$ where the minimum runs over all possible
representations~\eqref{AnotReps}.  The space~$\mathfrak{A}_0$ together
with the norm~$\|\cdot\|_{\mathfrak{A}_0}$ is a Banach algebra,
see~\cite{PS-DiffP} for details.  The following result is a
straightforward corollary of Theorem~\ref{HomomorphismResult}.

\begin{corl}[{\cite[Proposition 4.7]{PS-DiffP}}]
  \label{AzeroClass}
  Every~$\phi \in {\mathfrak{A}_0}$ is $dE^0 \otimes dE^1$-integrable in
  the space~$\sE$ for any measures~$dE^0$, $dE^1$, i.e.\
  ${\mathfrak{A}_0} \subseteq \Phi(\sE)$.  Moreover, if\/~$T_\phi =
  T_\phi(D_0, D_1)$, for some self-adjoint operators~$D_0, D_1 \eta
  \aM$, then $$
  \|T_\phi\|_{\Bd(\sE)} \leq \|\phi\|_{\mathfrak{A}_0}, $$
  for every~$\phi \in \mathfrak{A}_0$.
\end{corl}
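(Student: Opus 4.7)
The plan is to deduce the statement from Theorem \ref{HomomorphismResult} by integrating the ``rank one'' case against the external parameter $s$. Fix self-adjoint $D_0, D_1 \eta \aM$ and a representation of $\phi$ of the form (\ref{AnotReps}). For each $s \in S$ set $\phi_s(\lambda,\mu) := \alpha_s(\lambda)\,\beta_s(\mu)$. Applying the $*$-homomorphism property in Theorem \ref{HomomorphismResult} to the factorization $\phi_s(\lambda,\mu) = \alpha_s(\lambda) \cdot \beta_s(\mu)$ shows that $\phi_s \in \Phi(\sE)$ with $T_{\phi_s}(x) = \alpha_s(D_0)\, x\, \beta_s(D_1)$, and therefore $\|T_{\phi_s}\|_{\Bd(\sE)} \le \|\alpha_s\|_\infty \|\beta_s\|_\infty$.

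Next, I would introduce the candidate operator $T \colon \sE \to \sE$ by the Bochner-type integral $T(x) := \int_S \alpha_s(D_0)\, x\, \beta_s(D_1)\, d\nu(s)$. The integrand is weakly measurable in $\sE$, and the unitary invariance of the symmetric norm gives the pointwise bound $\|\alpha_s(D_0)\, x\, \beta_s(D_1)\|_\sE \le \|\alpha_s\|_\infty \|\beta_s\|_\infty \|x\|_\sE$, whose integral over $S$ is finite by hypothesis. Hence $T$ is well defined, linear, and satisfies $\|T\|_{\Bd(\sE)} \le \int_S \|\alpha_s\|_\infty \|\beta_s\|_\infty\, d\nu(s)$.

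The core of the argument is to verify that this $T$ satisfies the defining identity (\ref{DOIdefFmla}) for the DOI associated with $\phi$. For $x \in \sL^2 \cap \sEcross$ and $y \in \sL^2 \cap \sE$, pulling $\tau$ through the absolutely convergent Bochner integral yields $\tau(x\, T(y)) = \int_S \tau(x\, T_{\phi_s}(y))\, d\nu(s)$, and inserting the DOI formula for each $\phi_s$ produces an iterated integral over $S \times \Rl^2$. The main technical step is to justify Fubini's theorem so as to exchange the orders of integration. This is legitimate because the complex measure $\tau(x\, dE^0_\lambda\, y\, dE^1_\mu)$ has total variation bounded by $\|x\|_2 \|y\|_2$ (as recalled from \cite[Remark~3.1]{PSW-DOI}), which combined with $\nu$-integrability of $\|\alpha_s\|_\infty \|\beta_s\|_\infty$ gives absolute convergence of the double integral with total mass at most $\|x\|_2 \|y\|_2 \int_S \|\alpha_s\|_\infty \|\beta_s\|_\infty d\nu(s)$. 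After the swap one recovers $\tau(x\, T(y)) = \int_{\Rl^2} \phi(\lambda,\mu)\, \tau(x\, dE^0_\lambda\, y\, dE^1_\mu)$, so $T = T_\phi(D_0, D_1)$ and in particular $\phi \in \Phi(\sE)$.

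Finally, the bound on $\|T_\phi\|_{\Bd(\sE)}$ established in the second paragraph holds for every admissible representation (\ref{AnotReps}), so taking the infimum yields $\|T_\phi\|_{\Bd(\sE)} \le \|\phi\|_{\mathfrak{A}_0}$. The only genuine obstacle is the Fubini step, which is handled by the total-variation bound on the DOI measure; everything else amounts to a clean packaging of Theorem \ref{HomomorphismResult} over the auxiliary measure space $(S, d\nu)$.
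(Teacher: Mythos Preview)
Your proposal is correct and is precisely the ``straightforward corollary of Theorem~\ref{HomomorphismResult}'' the paper alludes to: the paper itself gives no proof here, merely citing \cite[Proposition~4.7]{PS-DiffP}, and your argument---apply Theorem~\ref{HomomorphismResult} to each rank-one factor $\alpha_s(\lambda)\beta_s(\mu)$, integrate over $(S,d\nu)$, and justify the swap of integrals via the total-variation bound $\|x\|_2\|y\|_2$ on the scalar measure $\tau(x\,dE^0_\lambda\,y\,dE^1_\mu)$---is exactly the intended unpacking. The only point worth tightening is the Bochner-integral step: since $\sE$ need not be separable, it is cleaner to define $T(y)$ first as a $\sigma$-weak (or weak operator) integral in $\aM$, which always converges under the hypothesis $\int_S \|\alpha_s\|_\infty\|\beta_s\|_\infty\,d\nu(s)<\infty$, and then read off the $\sE$-bound from the defining duality~(\ref{DOIdefFmla}) rather than from a Bochner estimate in $\sE$ directly; but this is a cosmetic rearrangement, not a gap.
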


The following result explains the connection between Double Operator
Integrals and Lipschitz and commutator estimates, see
also~\cite{PS-RFlow}.

\begin{thm}[{\cite[Theorem~3.1]{PoSu}}]
  \label{DOIcommutator}
  Let~$D_0, D_1 \eta \aM$ be self-adjoint linear operators, let~$a \in
  \aM$ and let~$f: \Rl \mapsto \Cx$ be a $C^1$-function with bounded
  derivative.  Let $$ \psi_f(\lambda, \mu) = \frac {f(\lambda) -
    f(\mu)}{\lambda - \mu},\ \ \text{if}\ \lambda \neq \mu
  $$ and~$\psi_f(\lambda, \lambda) = f'(\lambda)$.  If\/~$D_0 a - a
  D_1 \in \sL^\infty$ and~$\psi_f \in \Phi(\sL^\infty)$,
  then~$f(D_0)\, a - a \, f(D_1) \in \sL^\infty$ and $$ f(D_0)\, a - a
  \, f(D_1) = T_{\psi_f} (D_0 a - a D_1), $$ where~$T_{\psi_f} =
  T_{\psi_f} (D_0, D_1)$.
\end{thm}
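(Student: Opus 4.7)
The central identity is the elementary factorisation
\[
f(\lambda)-f(\mu) \;=\; \psi_f(\lambda,\mu)\,(\lambda-\mu),\qquad \lambda,\mu\in\Rl,
\]
combined with the $*$-homomorphism $\phi\mapsto T_\phi(D_0,D_1)$ of Theorem~\ref{HomomorphismResult}. According to that theorem the symbols $\phi_1(\lambda,\mu)=\lambda$ and $\phi_2(\lambda,\mu)=\mu$ correspond respectively to left multiplication by $D_0$ and right multiplication by $D_1$, so formally
\[
T_{\psi_f}(D_0 a - a D_1) \;=\; T_{\psi_f}\bigl(T_{\phi_1-\phi_2}(a)\bigr) \;=\; T_{\psi_f\cdot(\phi_1-\phi_2)}(a) \;=\; T_{f(\lambda)-f(\mu)}(a) \;=\; f(D_0)\,a - a\,f(D_1),
\]
which is precisely the claim. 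The entire difficulty is that $\phi_1-\phi_2$ and $f(\lambda)-f(\mu)$ are unbounded symbols, so none of the intermediate double operator integrals need extend to bounded operators on~$\sL^\infty$, and the composition above has no a priori meaning.

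The plan is to first establish the identity for bounded cut-offs and then pass to the limit. Introduce the spectral projections $p_n=E^{D_0}([-n,n])$, $q_n=E^{D_1}([-n,n])$, the bounded self-adjoint operators $\tilde D_0^{(n)}=D_0 p_n$, $\tilde D_1^{(n)}=D_1 q_n$ in $\aM$, and the truncated element $a_n=p_n a q_n$. Using that $p_n,q_n$ commute with the associated spectral measures, one checks directly that
\[
\tilde D_0^{(n)} a_n - a_n \tilde D_1^{(n)} \;=\; p_n(D_0 a - a D_1)q_n,\qquad f(\tilde D_0^{(n)})\, a_n - a_n\, f(\tilde D_1^{(n)}) \;=\; p_n\bigl(f(D_0)a - af(D_1)\bigr)q_n.
\]
For the truncated pair the joint spectrum is compact, $\psi_f$ is bounded and continuous, and it is uniformly approximable there by elements of $\mathfrak{A}_0$ (for instance by polynomials in $\lambda,\mu$, after which Corollary~\ref{AzeroClass} applies). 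Hence the formal calculation becomes literally valid for the truncated operators and yields
\[
p_n\bigl(f(D_0)a - af(D_1)\bigr)q_n \;=\; T_{\psi_f}\bigl(\tilde D_0^{(n)},\tilde D_1^{(n)}\bigr)\bigl(p_n(D_0 a - a D_1)q_n\bigr).
\]

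The main obstacle is the passage $n\to\infty$. The left-hand side converges strongly on $\sH$ to $f(D_0)a-af(D_1)$ (even if the latter is a priori only densely defined), while on the right the arguments $p_n(D_0 a-aD_1)q_n$ converge weakly$^*$ in $\sL^\infty$ to $D_0 a - a D_1$. The subtle point is that the DOI on the right is attached to the varying spectral data $(\tilde D_0^{(n)},\tilde D_1^{(n)})$ rather than to $(D_0,D_1)$. I would handle this by pairing both sides of the truncated identity against an arbitrary test element $x\in\sL^1\cap\sL^2$ and invoking the trace-duality definition~\eqref{DOIdefFmla}: this converts each side into an integral against a finite complex measure supported in $[-n,n]^2$, and a dominated-convergence argument using the uniform bound $\|\psi_f\|_\infty\le\|f'\|_\infty$ (via the mean value theorem) yields in the limit
\[
\tau\bigl(x\bigl(f(D_0) a - a f(D_1)\bigr)\bigr) \;=\; \int_{\Rl^2}\psi_f(\lambda,\mu)\,\tau\bigl(x\,dE^0_\lambda\,(D_0 a - a D_1)\,dE^1_\mu\bigr) \;=\; \tau\bigl(x\, T_{\psi_f}(D_0,D_1)(D_0 a - a D_1)\bigr).
\]
Since $\sL^1\cap\sL^2$ is norm dense in $(\sL^\infty)_*=\sL^1$ and the right-hand side above is already a bounded element of $\sL^\infty$ by the hypothesis $\psi_f\in\Phi(\sL^\infty)$, this simultaneously forces $f(D_0)a - af(D_1)\in\sL^\infty$ and delivers the asserted operator identity.
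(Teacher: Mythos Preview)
The paper does not prove this theorem at all: it is stated as a citation of \cite[Theorem~3.1]{PoSu}, and immediately afterwards the authors add that the extra hypothesis used there (that $\aM$ act in its left regular representation) was later shown to be unnecessary in \cite[Theorem~2.4.3]{PotapovThesis}. There is therefore no ``paper's own proof'' to compare against; you have supplied an argument where the paper simply quotes one.

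Your outline follows the standard route for such identities (spectral truncation, algebraic verification on bounded cut-offs, passage to the limit via trace duality and dominated convergence) and is in the right spirit. Two technical points deserve attention if you want to turn this sketch into a full proof. First, the defining identity~\eqref{DOIdefFmla} is stated only for $y\in\sL^2\cap\sE$, so when you write the limiting integral $\int_{\Rl^2}\psi_f\,\tau(x\,dE^0_\lambda\,(D_0a-aD_1)\,dE^1_\mu)$ with $D_0a-aD_1\in\sL^\infty$ you are implicitly extending~\eqref{DOIdefFmla} beyond its stated domain; this can be done (the measure $\tau(x\,dE^0\,y\,dE^1)$ makes sense and has bounded variation for $x\in\sL^1$, $y\in\sL^\infty$, and one approximates), but it is an extra step. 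Second, for the strong convergence of the left-hand side you should note that the statement allows $f$ to be unbounded (only $f'$ is assumed bounded), so $f(D_0)a-af(D_1)$ is, a priori, only defined on $\Dom{D_1}$; the convergence $p_n\bigl(f(D_0)a-af(D_1)\bigr)q_n\xi\to\bigl(f(D_0)a-af(D_1)\bigr)\xi$ for $\xi\in\Dom{D_1}$ requires a short argument using $|f(t)|\le|f(0)|+\|f'\|_\infty|t|$ to control the graph norm. With these details filled in, your argument is sound.
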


The result above stated and proved in~\cite[Theorem~3.1]{PoSu} under the
additional assumption that $\aM$ is taken in its left regular
representation. As shown in~\cite[Theorem~2.4.3]{PotapovThesis} this
assumption is redundant.

A decomposition of~$\psi_f$ for the function~$f$
from~\eqref{MainFuncIntro} in the form~\eqref{AnotReps} and further
analysis of this decomposition given in this paper show that~$T_{\psi_f}
\in \Bd(\sL^\infty, \sE)$ for every symmetric space~$\sE$ and this
result underlies the applications of double operator integration theory
to noncommutative geometry given in Section~\ref{FredholmSec}.  In the
rest of this section, we collect some preliminary material for this
analysis.

Recall that~$\Lambda_\alpha$, $0\leq \alpha \leq 1$ stands for the
class of all H\"older functions, i.e. the functions~$f: \Rl \mapsto
\Cx$ such that $$ \|f\|_{\Lambda_\alpha} := \sup_{t_1, t_2} \frac
{|f(t_1) - f(t_2)|}{|t_1 - t_2|^\alpha} < + \infty. $$

\begin{thm}
  \label{HolderCriterion}
  Let~$f: \Rl \mapsto \Cx$.  If\/~$\|f\|_{\Lambda_\theta},
  \|f'\|_\infty, \|f'\|_{\Lambda_\epsilon} < \infty$, for some~$0 \leq
  \theta < 1$ and~$0 < \epsilon \leq 1$, then $\psi_f \in
  \mathfrak{A}_0$.  Moreover, there is a constant~$c = c(\epsilon,
  \theta)>0$ such that $$ \|\psi_f\|_{\mathfrak{A}_0} \leq c\,
  (\|f\|_{\Lambda_\theta} + \|f'\|_\infty + \|f'\|_{\Lambda_\epsilon}).
  $$
\end{thm}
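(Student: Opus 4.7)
The plan is to build the required decomposition~\eqref{AnotReps} of~$\psi_f$ by a smooth dyadic partition of unity adapted to the diagonal, combined with a local Fourier analysis on each dyadic piece.  Fix~$\eta \in C_c^\infty(\Rl)$ supported in~$\{1/2 \leq |t| \leq 2\}$ with $\sum_{n\in\Z}\eta(2^{-n}t)=1$ for $t\neq 0$, and write
$$ \psi_f(\lambda,\mu) \;=\; \sum_{n\in\Z}g_n(\lambda,\mu), \qquad g_n(\lambda,\mu) \;:=\; \psi_f(\lambda,\mu)\,\eta\bigl(2^{-n}(\lambda-\mu)\bigr), $$
so each~$g_n$ is supported on the annular strip $2^{n-1}\leq |\lambda-\mu|\leq 2^{n+1}$.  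Because $\mathfrak{A}_0$ is a Banach space, it suffices to find a summable majorant for $\|g_n\|_{\mathfrak{A}_0}$.

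The two bounds driving the estimate are the following.  On the support of~$g_n$, the H\"older hypothesis on~$f$ gives
$$|\psi_f(\lambda,\mu)| \;\leq\; \|f\|_{\Lambda_\theta}\,|\lambda-\mu|^{\theta-1} \;\lesssim\; \|f\|_{\Lambda_\theta}\,2^{n(\theta-1)}.$$
Globally on~$\Rl^2$, the integral representation $\psi_f(\lambda,\mu) = \int_0^1 f'(\mu+t(\lambda-\mu))\,dt$ combined with $f'\in\Lambda_\epsilon$ yields
$$|\psi_f(\lambda_1,\mu_1)-\psi_f(\lambda_2,\mu_2)| \;\leq\; \|f'\|_{\Lambda_\epsilon}\max(|\lambda_1-\lambda_2|,|\mu_1-\mu_2|)^\epsilon.$$
After translating and rescaling $g_n$ so that its support sits on a fixed scale, these become a pointwise bound $\lesssim\|f\|_{\Lambda_\theta}2^{n(\theta-1)}$ (dominant as $n\to+\infty$ since $\theta<1$) and a $C^\epsilon$-bound $\lesssim\|f'\|_{\Lambda_\epsilon}\,2^{n\epsilon}$ (dominant as $n\to-\infty$ since $\epsilon>0$) on the rescaled piece; $\|f'\|_\infty$ controls the transitional regime.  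Two convergent geometric series then deliver the stated inequality for $\|\psi_f\|_{\mathfrak{A}_0}$.

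The remaining technical step is the passage from the smoothness/size bounds on each~$g_n$ to an $\mathfrak{A}_0$-bound.  I would further localize~$g_n$ in~$\mu$ on scale~$2^n$ by a smooth partition $\chi(2^{-n}\mu-k)$, $k\in\Z$, and expand each compactly supported piece, translated and rescaled onto a fixed unit square, as an absolutely convergent double Fourier series; the rescaled exponentials $e^{ip\lambda}, e^{iq\mu}$ are then the product functions in~\eqref{AnotReps}.  The main obstacle is exactly this absolute convergence: a generic $C^\epsilon$ function on a two-dimensional cube with $\epsilon<1$ does \emph{not} have summable Fourier coefficients, so one must exploit the anisotropic smoothness of~$g_n$ --- extra regularity along $\lambda+\mu$ supplied by the cut-off~$\chi$, and natural regularity at scale~$2^n$ in the transverse direction from~$\eta_n$ --- to obtain a mixed-smoothness Sobolev-type bound that does imply absolute summability of the Fourier coefficients, with careful bookkeeping of the sum over the localization index~$k$ needed to assemble the final global representation.
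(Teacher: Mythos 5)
Your plan — dyadic decomposition of $\psi_f$ along $|\lambda-\mu|$, plus a pointwise bound $\lesssim\|f\|_{\Lambda_\theta}2^{n(\theta-1)}$ for wide strips and a H\"older bound $\lesssim\|f'\|_{\Lambda_\epsilon}2^{n\epsilon}$ for narrow ones — is pointed in the right general direction, and the two bounds you isolate are indeed the correct ingredients. However, as described the argument has genuine gaps, and you yourself flag the central one without resolving it: a $C^\epsilon$ function on a $2$-dimensional cube with $\epsilon<1$ does not have absolutely summable Fourier coefficients, and nothing in the data gives you anything better than $C^\epsilon$ regularity of $\psi_f$ in the diagonal direction. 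The claimed ``extra regularity along $\lambda+\mu$ supplied by the cut-off~$\chi$'' is not real: a smooth partition of unity cannot improve the modulus of continuity of the factor $\psi_f$ that multiplies it. So the proposed $2$D Fourier-series expansion of each localized piece has no reason to converge absolutely when $\epsilon\le 1/2$.

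There is a second, independent gap: the sum over the $\mu$-localization index $k$. The $\mathfrak{A}_0$-norm is a projective tensor norm with no orthogonality, so if each piece $g_n\chi(2^{-n}\mu-k)$ has $\mathfrak{A}_0$-norm bounded below (which it generically does, as there is no decay in $k$), the sum over $k\in\Z$ of these norms diverges. ``Careful bookkeeping'' is promised but not supplied, and it is precisely the step on which the construction stands or falls. A further symptom that the bare majorants are insufficient: for $n\to-\infty$ the piece $g_n\approx f'(\mu)\,\eta(2^{-n}(\lambda-\mu))$ has $\mathfrak{A}_0$-norm $\approx \|f'\|_\infty$, constant in~$n$, so the geometric series you invoke does not converge. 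To get decay in $n<0$ one must first subtract the diagonal value $f'(\mu)$ (whose contribution must be resummed globally via $\sum_n\eta_n=1$), and for $n>0$ one must use that $\partial_u\psi_f$ exists away from the diagonal and decays — neither step appears in your sketch. None of this is fatal in principle, but it is the actual content of the lemma, and it is not present.

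For comparison, the paper avoids all of this by quoting Peller's theorem: $f'\in L^\infty$ and $f\in\dot B^1_{\infty,1}$ imply $\psi_f\in\mathfrak{A}_0$ with $\|\psi_f\|_{\mathfrak{A}_0}\lesssim\|f'\|_\infty+\|f\|_{\dot B^1_{\infty,1}}$. The work then reduces to a one-variable embedding: $\|f\|_{\dot B^1_{\infty,1}}\lesssim\|f\|_{\Lambda_\theta}+\|f'\|_{\Lambda_\epsilon}$, proved via the Poisson integral characterization $\|f\|_{\dot B^1_{\infty,1}}\sim\int_0^\infty\|u''_{ss}\|_\infty\,ds$, splitting at $s=1$ and using the H\"older conditions and the semigroup property of the Poisson kernel. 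This is cleaner, and it places the hard multi-variable Fourier-analytic work inside the cited theorem rather than re-deriving it. If you want a self-contained proof along your lines, the correct Littlewood--Paley decomposition to use is of $f$ itself in frequency (each compactly frequency-supported piece $f_n$ trivially has $\widehat{(f_n)'}\in L^1$ with $\|\widehat{(f_n)'}\|_1\lesssim 2^n\|f_n\|_\infty$), not a dyadic decomposition of $\psi_f$ in $|\lambda-\mu|$ followed by a $2$D Fourier series.
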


\begin{proof}
  We let symbol~$c$ stand for a positive constant which may vary from
  line to line.  The proof is based on the following result due to
  V.Peller~\cite{Peller1985-MR800919}, see also~\cite{PS-DiffP}.  If~$f'
  \in L^\infty$ and~$f \in \dot B^1_{\infty, 1}$, then~$\psi_f \in
  {\mathfrak{A}_0}$ and
  \begin{equation*}
    \|\psi_f\|_{\mathfrak{A}_0} \leq c\,
    (\|f'\|_\infty + \|f\|_{\dot B^1_{\infty, 1}}),
  \end{equation*}
  where~${\dot B^1_{\infty, 1}}$ is the homogeneous Besov class,
  see~\cite{Petre1976-MR0461123, Stein1970-MR0290095}.  To finish the
  proof, we shall show that
  \begin{equation}
    \label{HolderCriterionTmp}
    \|f\|_{\dot B^1_{\infty, 1}} \leq c\,
    (\|f\|_{\Lambda_\theta} + \|f'\|_{\Lambda_\epsilon}).
  \end{equation}
  The argument is rather standard.  Let~$f(t)$ be a function and
  let~$u(t, s)$, $s > 0$ be the Poisson integral of the function~$f$,
  i.e. $$ u(t, s) = f * P_s(t) = \int_{\Rl} f(\tau)\, P_s(t - \tau)\,
  d\tau,\ \ t\in \Rl,\ s > 0, $$ where~$P_s(t)$ is the Poisson kernel,
  i.e. $$ P_s(t) = \frac 1 \pi\, \frac s {t^2 + s^2}. $$ Let~$u'_s$
  and~$u''_{ss}$ stand for the derivatives~$\frac {\partial u}{\partial
    s}$ and~$\frac {\partial^2 u} {\partial s^2}$, respectively.  Recall
  that, for every~$0 \leq \alpha \leq 1$, there is a numerical
  constant~$c_\alpha > 0$ such that (see~\cite[Ch.~V,
  Section~4.2]{Stein1970-MR0290095})
  \begin{equation}
    \label{HolderCriterionTemp}
    \sup_{s > 0} s^{1 - \alpha} \, \|u'_s\|_\infty \leq \, c_\alpha \,
    \|f\|_{\Lambda_\alpha}.
  \end{equation}
  Recall also that (see~\cite{Stein1970-MR0290095, Petre1976-MR0461123})
  \begin{equation}
    \label{HolderCriterionTempII}
    \|f\|_{\dot B^1_{\infty, 1}} \sim \int_0^\infty
    \|u''_{ss}\|_\infty\, ds
  \end{equation}
  with equivalence up to a positive numerical constant.

  Fix~$f$ such that~$f \in \Lambda_\theta$, $0 \leq \theta < 1$ and~$f'
  \in \Lambda_\epsilon$, $0 < \epsilon \leq 1$.  It now follows
  from~\eqref{HolderCriterionTemp} that
  \begin{equation}
    \label{HolderCriterionTempIV}
    \|u'_s\|_\infty \leq c_\theta\, \frac {\|f\|_{\Lambda_\theta}}{s^{1 - \theta}}
    \ \ \text{and}\ \ \|u''_{ss}\|_\infty \leq c_\epsilon\, \frac
    {\|f'\|_{\Lambda_\epsilon}} {s^{1 - \epsilon}}.
  \end{equation}
  The Poisson kernel~$P_s$ possesses the group property~$P_{s_1} *
  P_{s_2} = P_{s_1 + s_2}$, $s_1, s_2 > 0$.  Consequently, $$
  u(s_1 +
  s_2, t) = u(s_1, \cdot) * P_{s_2} (t). $$
  Taking~$\frac{\partial^2}{\partial s_1 \partial s_2}$ and then
  letting~$s_1 = s_2 = \frac s 2$ yields $$
  u''_{ss} (s, t) = u'_s(s/2,
  \cdot) * \frac {\partial P_{s/2}}{\partial s} (t). $$
  The latter
  implies
  \begin{equation}
    \label{HolderCriterionTempIII}
    \|u''_{ss}\|_\infty \leq \|u'_s\|_\infty\, \left\| \frac {\partial
        P_{s/2}}{\partial s}  \right\|_1 \leq c_0\, \frac
    {\|u'_s\|_\infty}s,\ \ s > 0,
  \end{equation}
  where~$c_0$ is given by $$ c_0 = s\, \left\| \frac{\partial
      P_s}{\partial s}\right\|_1 > 0. $$
  Combining~\eqref{HolderCriterionTempIII} with the first estimate
  in~\eqref{HolderCriterionTempIV} yields $$ \|u''_{ss}\|_\infty \leq
  c\, \frac {\|f\|_{\Lambda_\theta}}{s^{2 -\theta}}. $$ Now the last
  inequality together with the second estimate
  in~\eqref{HolderCriterionTempIV} gives
  \begin{align*}
    \|f\|_{\dot B^1_{\infty, 1}} \leq &\, c\, \int_0^\infty
    \|u''_{ss}\|_\infty\, ds \cr = &\, c \, \int_0^1 \|u''_{ss}\|_\infty
    \, ds + c\, \int_1^\infty \|u''_{ss}\|_\infty\, ds \cr \leq &\, c\,
    \|f'\|_{\Lambda_\epsilon}\, \int_0^1 \frac {ds}{s^{1 - \epsilon}} +
    c\, \|f\|_{\Lambda_\theta} \, \int_1^\infty \frac {ds}{s^{2 -
        \theta}} \cr \leq &\, c\, \left(\strut
      \|f'\|_{\Lambda_\epsilon} + \|f\|_{\Lambda_\theta}\right).
  \end{align*}
  The latter finishes the proof of~\eqref{HolderCriterionTmp}.  The
  theorem is proved.
\end{proof}

\begin{rem}
  Theorem~\ref{HolderCriterion} is stated in a rather restrictive form
  since the requirement~$\|f'\|_\infty < \infty$ is redundant.  Indeed,
  it may be shown that $$ \|f'\|_\infty \leq c(\theta, \epsilon) \,
  (\|f\|_{\Lambda_\theta} + \|f'\|_{\Lambda_\epsilon}),\ \ 0\leq \theta
  \leq 1,\ 0 \leq \epsilon \leq 1. $$ On the other hand, for all
  applications of Theorem~\ref{HolderCriterion} in the text below the
  requirement~$\|f'\|_\infty < \infty$ clearly holds.
\end{rem}

The following is a well-known criterion to verify boundedness of the
operator~$T_\phi = T_{\phi}(D_0, D_1)$.  We supply a simple proof for
convenience of the reader.

\begin{lem}
  \label{DOIbddCr}
  Let~$D_0, D_1 \eta \aM$ be self-adjoint linear operators, let~$f:
  \Rl \mapsto \Cx$ and let~$\hat f$ be the Fourier transform of~$f$.
  If~$\hat f$ is integrable, i.e.~$\hat f \in L^1(\Rl)$,
  then~$T_{\phi} = T_\phi(D_0, D_1) \in \Bd(\sL^\infty)$,
  where~$\phi(\lambda, \mu) = f(\lambda - \mu)$, $\lambda, \mu \in
  \Rl$ and $$ \|T_\phi\|_{\Bd(\sL^\infty)} \leq \frac 1{\sqrt{2\pi}}\,
  \|\hat f\|_{L^1}. $$
\end{lem}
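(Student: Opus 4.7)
The plan is to realize $\phi(\lambda,\mu)=f(\lambda-\mu)$ as an element of the algebra $\mathfrak{A}_0$ by Fourier inversion and then invoke Corollary~\ref{AzeroClass}. Concretely, assuming the symmetric convention $f(t)=\frac{1}{\sqrt{2\pi}}\int_{\Rl}\hat f(s)\,e^{ist}\,ds$, which is legitimate since $\hat f\in L^1(\Rl)$ implies $f$ is continuous and the inversion formula holds pointwise, I write
\begin{equation*}
\phi(\lambda,\mu)=f(\lambda-\mu)=\frac{1}{\sqrt{2\pi}}\int_{\Rl}\hat f(s)\,e^{is\lambda}\,e^{-is\mu}\,ds.
\end{equation*}

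This is exactly a representation of the form \eqref{AnotReps} with $S=\Rl$, the measure $d\nu(s)=\frac{1}{\sqrt{2\pi}}\,\hat f(s)\,ds$, and $\alpha_s(\lambda)=e^{is\lambda}$, $\beta_s(\mu)=e^{-is\mu}$. Since $\|\alpha_s\|_\infty=\|\beta_s\|_\infty=1$ for every $s\in\Rl$, one obtains
\begin{equation*}
\|\phi\|_{\mathfrak{A}_0}\le \int_{\Rl}\|\alpha_s\|_\infty\,\|\beta_s\|_\infty\,d|\nu|(s)=\frac{1}{\sqrt{2\pi}}\,\|\hat f\|_{L^1}.
\end{equation*}
In particular, $\phi\in\mathfrak{A}_0$, so Corollary~\ref{AzeroClass} applies with $\sE=\sL^\infty$ and yields $T_\phi\in \Bd(\sL^\infty)$ together with the estimate $\|T_\phi\|_{\Bd(\sL^\infty)}\le \|\phi\|_{\mathfrak{A}_0}\le \frac{1}{\sqrt{2\pi}}\|\hat f\|_{L^1}$, which is the desired conclusion.

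As a sanity check, Theorem~\ref{HomomorphismResult} guarantees that the factor $e^{is\lambda}e^{-is\mu}$ is integrated by the operator $x\mapsto e^{isD_0}\,x\,e^{-isD_1}$, whose norm on $\sL^\infty$ is $1$; this is morally why the decomposition above works, and in fact one can also verify \eqref{DOIdefFmla} directly by Fubini, interchanging the integral in $s$ with the spectral integration. The only conceptual step is the Fourier-inversion decomposition; after that, everything is packaged by the already-established Corollary~\ref{AzeroClass}. There is no serious obstacle, and the constant $\frac{1}{\sqrt{2\pi}}$ is tracked solely by the normalization in the Fourier inversion formula.
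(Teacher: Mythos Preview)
Your proof is correct and follows essentially the same approach as the paper: write $\phi(\lambda,\mu)=f(\lambda-\mu)$ via Fourier inversion as an integral of $e^{\pm is\lambda}e^{\mp is\mu}$ against $\frac{1}{\sqrt{2\pi}}\hat f(s)\,ds$, observe that this places $\phi$ in $\mathfrak{A}_0$ with norm at most $\frac{1}{\sqrt{2\pi}}\|\hat f\|_{L^1}$, and apply Corollary~\ref{AzeroClass}. The only cosmetic difference is the sign convention in the exponent, which is immaterial.
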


\begin{proof}
  The proof is straightforward.  For the function~$\phi(\lambda, \mu)$
  we have the representation $$ \phi(\lambda, \mu) = f(\lambda - \mu)
  = \frac 1{\sqrt{2 \pi}} \int_{\Rl} \hat f(s)\, e^{-is(\lambda -
    \mu)}\, ds. $$ Since~$\hat f$ is integrable, we readily obtain
  that~$\phi \in \mathfrak{A}_0$ and~$\|\phi\|_{\mathfrak{A}_0} \leq
  \frac 1{\sqrt {2 \pi}} \, \|\hat f \|_{L^1}$. The claim of the lemma
  now follows from Corollary~\ref{AzeroClass}.
\end{proof}

Note, that the norm estimate of the operator~$T_\phi = T_\phi(D_0, D_1)$
in the latter lemma does not depend on the operators~$D_0$ and~$D_1$.
Next we shall give a simple criterion (from~\cite{BraRob1987-MR887100})
for a Borel function~$f: \Rl \mapsto \Cx$ to be such that~$\hat f \in
L^1(\Rl)$.  We shall present the proof for convenience of the reader.

\begin{lem}
  \label{FourierLi}
  If~$f: \Rl \mapsto \Cx$ is an absolutely continuous function with~$f,
  f' \in L^2(\Rl)$, then~$\hat f \in L^1(\Rl)$ and $$ \left\| \hat f
  \right\|_{L^1} \leq \sqrt 2 \, \left( \left\| f \right\|_{L^2} +
    \left\| f' \right\|_{L^2} \right). $$
\end{lem}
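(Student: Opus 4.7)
The plan is to reduce the $L^1$-estimate on $\hat f$ to $L^2$-estimates on $\hat f$ and on $t\hat f(t)$ (both of which are controlled via Plancherel), and then to split the integration domain into $\{|t|\leq 1\}$ and $\{|t|>1\}$ so that Cauchy–Schwarz yields a weight whose $L^2$-norm is exactly $\sqrt{2}$ on each piece.

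First I would record the two Plancherel identities that are available under the hypotheses. Since $f\in L^2$, Plancherel gives $\|\hat f\|_{L^2}=\|f\|_{L^2}$. Since $f$ is absolutely continuous with $f,f'\in L^2$, a standard argument shows $f\in C_0(\Rl)$ (so boundary terms vanish at infinity), and hence the derivative rule $\widehat{f'}(t)=it\,\hat f(t)$ holds in the sense of $L^2$-functions; applying Plancherel again gives $\|t\,\hat f(t)\|_{L^2}=\|f'\|_{L^2}$.

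Next I would split
\[
\|\hat f\|_{L^1}=\int_{|t|\leq 1}|\hat f(t)|\,dt+\int_{|t|>1}|\hat f(t)|\,dt.
\]
For the first integral, Cauchy–Schwarz against the constant $1$ gives
\[
\int_{|t|\leq 1}|\hat f(t)|\,dt\leq\Bigl(\int_{|t|\leq 1}1\,dt\Bigr)^{1/2}\|\hat f\|_{L^2}=\sqrt{2}\,\|f\|_{L^2}.
\]
For the second integral, I would write $|\hat f(t)|=|t|^{-1}\,|t\,\hat f(t)|$ and apply Cauchy–Schwarz:
\[
\int_{|t|>1}|\hat f(t)|\,dt\leq\Bigl(\int_{|t|>1}t^{-2}\,dt\Bigr)^{1/2}\|t\,\hat f(t)\|_{L^2}=\sqrt{2}\,\|f'\|_{L^2}.
\]
Summing the two bounds yields $\|\hat f\|_{L^1}\leq\sqrt{2}\,(\|f\|_{L^2}+\|f'\|_{L^2})$, which is the asserted inequality; in particular $\hat f\in L^1(\Rl)$.

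The only mildly delicate point is the justification that $\widehat{f'}(t)=it\,\hat f(t)$ for merely absolutely continuous $f$ with $f,f'\in L^2$, which is where the boundary behaviour at $\pm\infty$ must be controlled; I would handle this by noting that absolute continuity together with $f'\in L^1_{\mathrm{loc}}$ and $f\in L^2$ forces $f(x)\to 0$ at infinity (otherwise $f$ would fail to be square-integrable), after which the identity follows by integration by parts against $e^{-ist}$ and a limiting argument. Everything else is routine.
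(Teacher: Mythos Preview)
Your proof is correct and follows essentially the same approach as the paper: split $\int_{\Rl}|\hat f|$ into $\{|t|\le 1\}$ and $\{|t|>1\}$, apply Cauchy--Schwarz on each piece (against $1$ on the first and against $|t|^{-1}$ on the second), and invoke Plancherel for $f$ and $f'$ to obtain the factor $\sqrt{2}$ in both terms. Your additional remark justifying $\widehat{f'}(t)=it\,\hat f(t)$ via the vanishing of $f$ at infinity is a welcome detail that the paper leaves implicit.
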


\begin{proof}
  The proof is a combination of the H\"older inequality and the
  Plancherel identity
  \begin{align*}
    \int_{\Rl} | \hat f(t)|\, dt = &\, \int_{t \in [-1, 1]} |\hat
    f(t)|\, dt + \int_{t \not \in [-1, 1]} |t|^{-1} \, |t \hat f(t)|
    \, dt \cr \leq &\, \sqrt 2\, \left [ \int_{t \in [-1, 1]} |\hat
      f(t)|^2 \,dt \right ]^{\frac 12} \cr + &\, \left [ \int_{t \not
        \in [-1, 1]} |t|^{-2}\, dt \right ]^{\frac 12} \cdot \left [
      \int_{t \not \in [-1, 1]} |t \hat f(t)|^2\, dt \right ]^{\frac
      12} \cr \leq &\, \sqrt 2\, \left( \|f\|_{L^2} + \|f'\|_{L^2} \right)
  \end{align*}
\end{proof}

Let~$(S, \mathfrak{F})$ and~$(S', \mathfrak{F}')$ be two measure spaces
and let~$\nu$ be a measure on~$(S, \mathfrak{F})$.  If~$\omega : (S,
\mathfrak{F}) \mapsto (S', \mathfrak{F}')$ is a measurable mapping,
i.e.\ $\omega: S \mapsto S'$ and~$\omega^{-1}(A) \in \mathfrak{F}$ for
every~$A \in \mathfrak{F}'$, then the mapping~$\omega$ induces the
measure~$\nu \circ \omega^{-1}$ on the space~$(S', \mathfrak{F}')$ by
the assigning $$ \nu \circ \omega^{-1}(A) := \nu ( \omega^{-1}(A)),\ \ A
\in \mathfrak{F}'. $$ If~$f: S' \mapsto \Cx$ is a
$\mathfrak{F}'$-measurable function, then~$f \circ \omega$ is
$\mathfrak{F}$-measurable and
\begin{equation}
  \label{ClassicalExchange}
  \int_S f \circ \omega\, d \nu = \int_{S'} f \, d \nu \circ \omega^{-1},
\end{equation}
provided either of the of the integrals exists.  The following lemma
extends this relation to the setting of double operator integrals.

\begin{lem}
  \label{ChangeOfVarsDOI}
  Let~$\phi \in \mathfrak{A}_0$ and let~$f_j : \Rl \mapsto \Rl$,
  $j=0,1$ be Borel functions.  We have that
  \begin{equation}
    \label{ChangeOfVars}
    T_{\phi'}(D_0, D_1) = T_{\phi}(D_0', D_1'),
  \end{equation}
  where~$\phi' \in \mathfrak{A}_0$, $\phi'(\lambda, \mu) :=
  \phi(f_0(\lambda), f_1(\mu))$, $\lambda, \mu \in \Rl$ and~$D_j' :=
  f_j(D_j) \eta \aM$, $j=0,1$.
\end{lem}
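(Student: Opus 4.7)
The plan is to reduce to elementary tensor functions via the integral representation of $\phi \in \mathfrak{A}_0$, dispatch that case using the composition rule of Borel functional calculus, and then integrate in $\nu$ to recover the general identity. To begin, fix a representation $\phi(\lambda, \mu) = \int_S \alpha_s(\lambda)\beta_s(\mu)\, d\nu(s)$ with $\int_S \|\alpha_s\|_\infty\|\beta_s\|_\infty\, d\nu(s) < \infty$. Since $f_0$ and $f_1$ are Borel, $\alpha_s \circ f_0$ and $\beta_s \circ f_1$ are again bounded Borel functions with the same sup norms, so
\[
\phi'(\lambda, \mu) = \int_S (\alpha_s \circ f_0)(\lambda)\, (\beta_s \circ f_1)(\mu)\, d\nu(s)
\]
exhibits $\phi' \in \mathfrak{A}_0$ with $\|\phi'\|_{\mathfrak{A}_0} \le \|\phi\|_{\mathfrak{A}_0}$; in particular, $T_{\phi'}(D_0, D_1)$ is well defined by Corollary \ref{AzeroClass}.

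For an elementary tensor $\phi(\lambda, \mu) = \alpha(\lambda)\beta(\mu)$, Theorem \ref{HomomorphismResult} yields
\[
T_\phi(D_0', D_1')(x) = \alpha(D_0')\, x\, \beta(D_1') = \alpha(f_0(D_0))\, x\, \beta(f_1(D_1)).
\]
The composition rule $(\alpha \circ f_j)(D_j) = \alpha(f_j(D_j))$ of the Borel functional calculus identifies this with $(\alpha \circ f_0)(D_0)\, x\, (\beta \circ f_1)(D_1) = T_{\phi'}(D_0, D_1)(x)$, by a second application of Theorem \ref{HomomorphismResult}. Hence \eqref{ChangeOfVars} holds on tensor products.

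To pass to general $\phi$, I would evaluate both sides of \eqref{ChangeOfVars} against the trace pairing $\tau(x\,\cdot\, y)$ for $x \in \sL^2 \cap \sEcross$ and $y \in \sL^2 \cap \sE$ via the defining formula \eqref{DOIdefFmla}. The complex measure $\tau(x\, dE^0_\lambda\, y\, dE^1_\mu)$ has total variation bounded by $\|x\|_2\|y\|_2$, and the absolute integrability $\int_S \|\alpha_s\|_\infty \|\beta_s\|_\infty\, d\nu(s) < \infty$ combined with the norm identities $\|\alpha_s \circ f_0\|_\infty = \|\alpha_s\|_\infty$ (and likewise for $\beta_s \circ f_1$) justifies a Fubini interchange that reduces the identity to the elementary-tensor case handled above. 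Uniqueness of the operator satisfying \eqref{DOIdefFmla} then yields $T_{\phi'}(D_0, D_1) = T_\phi(D_0', D_1')$. The only delicate point in the whole argument is this Fubini exchange; everything else is bookkeeping with Borel functional calculus.
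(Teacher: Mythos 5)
Your proof is correct, but it takes a genuinely different path from the paper's. The paper works entirely at the level of measures: writing $d\nu_{\lambda,\mu} = \tau(x\,dE^0_\lambda\,y\,dE^1_\mu)$, it observes $\phi' = \phi\circ\omega$ with $\omega(\lambda,\mu)=(f_0(\lambda),f_1(\mu))$, applies the classical change-of-variables identity~\eqref{ClassicalExchange} to get $\int\phi'\,d\nu = \int\phi\,d(\nu\circ\omega^{-1})$, identifies $E^j\circ f_j^{-1}$ as the spectral measure of $f_j(D_j)$, and concludes by uniqueness of the operator satisfying~\eqref{DOIdefFmla}. It never opens the $\mathfrak{A}_0$ representation. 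Your argument instead verifies the identity on elementary tensors via the homomorphism property and the composition rule of Borel functional calculus, then integrates the tensor identity over $(S,d\nu)$ using a Fubini interchange controlled by the total variation bound $\|x\|_2\|y\|_2$ and $\int_S\|\alpha_s\|_\infty\|\beta_s\|_\infty\,d\nu(s)<\infty$. Both arguments ultimately rest on the same fact (pushforward of spectral measures = functional calculus of the image), but the paper's is shorter and has broader scope: it establishes~\eqref{ChangeOfVars} for any $\phi$ for which both operators exist, without reference to a specific representation, whereas yours is tied to the $\mathfrak{A}_0$ decomposition and requires the joint measurability of $(s,\lambda)\mapsto\alpha_s(\lambda)$ that makes the Fubini step legitimate (implicit in the definition of $\mathfrak{A}_0$, but worth flagging). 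On the other hand, your approach makes the inequality $\|\phi'\|_{\mathfrak{A}_0}\le\|\phi\|_{\mathfrak{A}_0}$ explicit, which the paper asserts but does not spell out.
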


\begin{proof}
  We fix~$x, y \in \sL^2$ and set $$ d \nu = d \nu_{\lambda, \mu} = d
  \nu_{\lambda, \mu} (x, y, D_0, D_1) := \tau(x\, dE^0_\lambda\, y\,
  dE^1_\mu). $$ Let~$T_{\phi'} = T_{\phi'} (D_0, D_1)$.  Consider the
  mapping~$\omega : \Rl^2 \mapsto \Rl^2$ given by $$ (\lambda, \mu)
  \mapsto (f_0(\lambda), f_1(\mu)). $$ Note that~$\phi' = \phi \circ
  \omega$.  Applying identity~\eqref{ClassicalExchange}
  and~\eqref{DOIdefFmla}, we now have
  \begin{equation}
    \label{ChangeOfVarTempI}
    \tau(x\, T_{\phi'}(y)) = \int_{\Rl^2} \phi'\, d
    \nu  = \int_{\Rl^2} \phi \circ \omega \,
    d \nu = \int_{\Rl^2} \phi \, d \nu \circ
    \omega^{-1}.
  \end{equation}
  The measure~$\nu \circ \omega^{-1}$ is given by
  \begin{equation}
    \label{ChangeOfVarTempII}
    \nu \circ \omega^{-1} = \tau (x \, E^0\, y \, E^1 )
    \circ \omega^{-1} = \tau (x \, (E^0 \circ
    f_0^{-1}) \, y \, (E^1 \circ f_1^{-1})),
  \end{equation}
  where the spectral measure~$E^j \circ f_j^{-1}$, $j=0,1$ is defined
  by $$
  E^{j} \circ f_j^{-1}(A) := E(f^{-1}(A)),\ \ j=0,1, $$
  for Borel
  set~$A \subseteq \Rl$.  Let us note that,
  applying~\eqref{ClassicalExchange} again (see
  also~\cite[Section~13.28]{Rudin1991-MR1157815}), we see that $$
  \int_{\Rl} \lambda \, d (E^j \circ f_j^{-1})_\lambda = \int_{\Rl}
  f_j(\lambda) \, dE^j_\lambda = f_j(D_j),\ \ j=0,1. $$
  Thus, the
  measure~$dF^j := d(E^j \circ f_j^{-1})$ is the spectral measure of the
  operator~$f_j(D_j)$, $j = 0,1$.  Consequently,
  combining~\eqref{ChangeOfVarTempI} and~\eqref{ChangeOfVarTempII}, we
  readily obtain that $$
  \tau(x\, T_{\phi'}(y)) = \int_{\Rl^2}
  \phi(\lambda, \mu)\, d \tau(x\, dF^0_\lambda \, y\, dF^1_\mu) =
  \tau(x\, T_{\phi} (y)), $$
  where~$T_\phi = T_\phi(D_0', D_1')$.  The
  latter identity, together with uniqueness of the operator~$T_\phi$
  satisfying~\eqref{DOIdefFmla}, completes the proof of the lemma.
\end{proof}

The identity~\eqref{ChangeOfVars} together with Lemma~\ref{DOIbddCr}
yield

\begin{lem}
  \label{DOIbddCrCor}
  Let~$D_0, D_1 \eta \aM$ be positive linear operators, let~$f: \Rl
  \mapsto \Cx$ be Borel and let~$g(t) = f(e^{t})$, $t \in \Rl$.
  If\/~$\hat g \in L^1(\Rl)$, then\/~$T_{\phi}(D_0, D_1) \in
  \Bd(\sL^\infty)$, where $$ \phi(\lambda, \mu) = f \left (\frac
    \lambda \mu \right ),\ \ \lambda, \mu > 0 $$ and $$
  \|T_\phi\|_{\Bd(\sL^\infty)} \leq \frac 1{\sqrt{2 \pi}}\, \|\hat
  g\|_{L^1}. $$ Furthermore, the decomposition~\eqref{AnotReps} for
  the function~$\phi(\lambda, \mu)$ is given by $$ \phi(\lambda, \mu)
  = \int_{\Rl} \hat g(s)\, \lambda^{is}\, \mu^{-is}\, ds,\ \ \lambda,
  \mu > 0. $$

\end{lem}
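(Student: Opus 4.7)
My plan is to derive this lemma from Lemma~\ref{DOIbddCr} by means of the logarithmic change of variables encoded in Lemma~\ref{ChangeOfVarsDOI}. The key observation is the identity $\phi(\lambda,\mu) = f(\lambda/\mu) = g(\log\lambda - \log\mu)$ on $(0,\infty)^2$, which exhibits $\phi$ as a function of the \emph{difference} of $\log\lambda$ and $\log\mu$; this is precisely the form to which Lemma~\ref{DOIbddCr} applies directly.

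First I would introduce the auxiliary function $\psi(u,v) := g(u - v)$ on $\Rl^2$. By the proof of Lemma~\ref{DOIbddCr}, the hypothesis $\hat g \in L^1(\Rl)$ gives $\psi \in \mathfrak{A}_0$ with Fourier-inversion representation
\[
  \psi(u,v) \;=\; \frac{1}{\sqrt{2\pi}}\int_{\Rl} \hat g(s)\, e^{is u}\, e^{-is v}\, ds,
\]
and the norm bound $\|\psi\|_{\mathfrak{A}_0} \leq \tfrac{1}{\sqrt{2\pi}}\|\hat g\|_{L^1}$. Next, since $D_0, D_1$ are positive operators affiliated with $\aM$, I set $f_0(t) = f_1(t) := \log t$ for $t > 0$, extended arbitrarily to a Borel function on $\Rl$; the spectral theorem then defines $\log D_j \eta \aM$ as self-adjoint operators (possibly unbounded), which is sufficient for Lemma~\ref{ChangeOfVarsDOI}. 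Because $\psi(f_0(\lambda), f_1(\mu)) = g(\log\lambda - \log\mu) = \phi(\lambda,\mu)$ for $\lambda,\mu>0$, that lemma yields
\[
  T_\phi(D_0, D_1) \;=\; T_\psi(\log D_0, \log D_1),
\]
and then Corollary~\ref{AzeroClass} applied to $\psi$ delivers
\[
  \|T_\phi(D_0,D_1)\|_{\Bd(\sL^\infty)} \;\leq\; \|\psi\|_{\mathfrak{A}_0} \;\leq\; \tfrac{1}{\sqrt{2\pi}}\|\hat g\|_{L^1}.
\]
The claimed integral decomposition of $\phi$ in the form~\eqref{AnotReps} is obtained by substituting $u = \log\lambda$, $v = \log\mu$ into the representation of $\psi$ above, which produces the factors $\lambda^{is}\mu^{-is}$ weighted by $\hat g(s)$ (with the exact constant and sign of $s$ depending on the convention adopted for the Fourier transform).

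The only subtlety I anticipate is giving a meaning to $\log D_j$ when $0$ lies in the spectrum of $D_j$. This is handled by the observations that Lemma~\ref{ChangeOfVarsDOI} places no boundedness requirement on the transformed operators $f_j(D_j)$, and that the value of the Borel extension of $\log$ off $(0,\infty)$ is immaterial since, by positivity of $D_j$, the spectral measures of $\log D_j$ are supported in the image of the Borel restriction of $\log$ to the spectrum of $D_j$, modulo the spectral projection of $D_j$ at $\{0\}$, on which the bounded continuous function $g = \mathcal{F}^{-1}\hat g$ is controlled. Beyond this routine check, the argument is essentially formal once the change-of-variables lemma is invoked.
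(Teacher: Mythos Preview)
Your proposal is correct and follows essentially the same route as the paper: introduce $D_j' = \log D_j$, observe that $\phi(\lambda,\mu) = g(\log\lambda - \log\mu)$, apply Lemma~\ref{DOIbddCr} to the difference function $g$, and then transfer the result back via the change-of-variables Lemma~\ref{ChangeOfVarsDOI}. The paper's proof is slightly terser and does not pause on the spectral issue at $0$, but the argument is the same.
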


\begin{proof}
  Let us introduce the operator~$D'_j := \log D_j$, $j=0,1$ and the
  function~$\phi'(\lambda, \mu) = \phi(e^\lambda, e^\mu) =
  f(e^{\lambda - \mu}) = g(\lambda - \mu)$, $\lambda, \mu \in \Rl$.
  Note, that~$D'_j \eta\aM$, $j=0,1$.  Since~$\hat g \in L^1(\Rl)$, it
  readily follows from Lemma~\ref{DOIbddCr} that~$T_{\phi'}(D'_0,
  D'_1) \in \Bd(\sL^\infty)$.  On the other hand,
  from~\eqref{ChangeOfVars}, we obtain that~$T_\phi(D_0, D_1) =
  T_{\phi'}(D_0', D_1') \in \Bd(\sL^\infty)$.  Furthermore, it follows
  from Lemma~\ref{DOIbddCr} that the function~$\phi'$ has the
  decomposition $$ \phi(e^\lambda, e^\mu) = \phi'(\lambda, \mu) =
  \int_{\Rl} \hat g(s)\, e^{is (\lambda - \mu)} \, ds. $$ Making the
  back substitution finishes the proof of the lemma.
\end{proof}

At the end of the section we prove the following lemma which is
implicit in literature and frequently used in the following section.

\begin{lem}
  \label{MaximumPrincCorollary}
  If~$A \in \aM$ and if~$B_0, B_1 \in \sE$ and~$B_0, B_1$ are positive,
  then~$B_0^{1- \theta} A B^{\theta}_1 \in \sE$, for every~$0\leq \theta
  \leq 1$ and $$
  \|B_0^{1-\theta} A B_1^\theta\|_\sE \leq \|B_0\|_\sE^{1
    - \theta} \|A\|\, \|B_1\|_\sE^\theta. $$
\end{lem}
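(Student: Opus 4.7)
The plan is to prove the estimate by complex interpolation, in the style of Hadamard's three-lines lemma applied to a suitable holomorphic family of operators. Fix a test element $Y \in \sL^1 \cap \sL^\infty$ with $\|Y\|_{\sEcross} \leq 1$ and consider the scalar function
\begin{equation*}
  F(z) := \tau\bigl(Y\, B_0^{1-z}\, A\, B_1^z\bigr),\qquad z \in S := \{z \in \Cx : 0 \leq \mathrm{Re}(z) \leq 1\}.
\end{equation*}
The goal is to establish the pointwise bound $|F(\theta)| \leq \|A\|\, \|B_0\|_\sE^{1-\theta}\, \|B_1\|_\sE^\theta$; the claim of the lemma will then follow by taking the supremum over such $Y$, via the K\"othe duality between $\sE$ and $\sEcross$ (which is available under the submajorization monotonicity of $\sE$ imposed in the Preliminaries).

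For the boundary estimates the key observation is that $B_j^{\pm is}$, $s \in \Rl$, are unitaries in $\aM$ (since $B_0, B_1 \geq 0$ are affiliated with $\aM$), so that bi-unitary invariance of the symmetric norm is available. On $\mathrm{Re}(z)=0$ one writes $B_0^{1-it} A B_1^{it} = B_0 \cdot (B_0^{-it} A B_1^{it})$; the $\sE$--$\sEcross$ duality combined with $\|B_0 X\|_\sE \leq \|B_0\|_\sE\, \|X\|_\aM$ for $X \in \aM$ yields $|F(it)| \leq \|A\|\,\|B_0\|_\sE$. On $\mathrm{Re}(z)=1$, bi-unitary invariance gives $\|B_0^{-it} A B_1 B_1^{it}\|_\sE = \|A B_1\|_\sE \leq \|A\|\, \|B_1\|_\sE$, hence $|F(1+it)| \leq \|A\|\, \|B_1\|_\sE$. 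Applying Hadamard's three-lines principle to $F$ on $S$ then produces the desired interior bound at $z=\theta$.

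The main technical point is to verify the hypotheses of the three-lines lemma, namely that $F$ is holomorphic on the interior of $S$ and bounded and continuous on the closure. When $B_0, B_1 \in \aM \cap \sE$ both properties follow from the analytic functional calculus together with the uniform estimate $\|B_j^z\|_\aM \leq \max(1, \|B_j\|_\aM)$ valid on $S$. The general case $B_0, B_1 \in \sE$ is where the main obstacle lies, since $B_0^{1-z}$ and $B_1^z$ are then only $\tau$-measurable and may be unbounded. I would handle this by spectral truncation: set $B_j^{(n)} := B_j\,\chi_{[0,n]}(B_j) \in \aM \cap \sE$, apply the bounded case to the pair $(B_0^{(n)}, B_1^{(n)})$, and then pass to the limit $n \to \infty$. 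Since $B_j^{(n)} \to B_j$ in the strong operator topology, the products $(B_0^{(n)})^{1-\theta}\, A\, (B_1^{(n)})^\theta$ converge to $B_0^{1-\theta} A B_1^\theta$ in the measure topology, while $\|B_j^{(n)}\|_\sE \leq \|B_j\|_\sE$ by the monotonicity of singular value functions. The Fatou property of $\sE$ (which is contained in the submajorization monotonicity stated in the Preliminaries) then delivers both the membership in $\sE$ and the claimed norm bound.
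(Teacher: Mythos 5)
Your proof takes essentially the same approach as the paper's: the three‑lines (maximum modulus) principle on the strip $0\le\Re z\le 1$, with boundary estimates supplied by the unitarity of the imaginary powers $B_j^{\pm is}$ together with the one‑sided $\sE$--$\aM$ module estimates. The paper compresses this to a single maximum principle applied to the $\sE$‑valued holomorphic function $f(z)=\|B_0\|_\sE^{z-1}\|B_1\|_\sE^{-z}\,B_0^{1-z}AB_1^z$; your reduction to scalar functions $z\mapsto\tau(Y\,B_0^{1-z}AB_1^z)$ via K\"othe duality, and the spectral truncation to cope with $B_0,B_1$ being merely $\tau$‑measurable, are a reasonable and more explicit way to justify the same conclusion. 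So this is the paper's argument, spelled out.

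One inaccuracy worth correcting: the Fatou property of $\sE$ is \emph{not} contained in the submajorization monotonicity assumed in the Preliminaries. Submajorization monotonicity only asserts that $\|g\|_E\le\|f\|_E$ for $f,g\in E$ with $g\prec\prec f$; the Fatou property is the maximality condition equivalent to $E=E^{\times\times}$ isometrically, and there are strongly symmetric spaces (for instance, the separable part of a Marcinkiewicz space) that satisfy the former but not the latter. What your final step actually needs --- as does the paper's own terse argument, implicitly --- is that $\sEcross$ is norming for $\sE$, i.e.\ $\|X\|_\sE=\sup\{|\tau(YX)|:Y\in\sEcross,\ \|Y\|_\sEcross\le1\}$ for $X\in\sE$, together with some mechanism to pass from the uniform bound on the truncated approximants to membership of the limit in $\sE$. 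These should be invoked as hypotheses or cited (e.g.\ from the Dodds--Dodds--de Pagter duality references mentioned in the Preliminaries) rather than attributed to submajorization monotonicity.
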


\begin{proof}
  Consider the following holomorphic function with values in~$\sE$ $$
  f(z) = \|B_0\|_\sE^{z - 1} \|B_1\|_\sE^{-z} B_0^{1 - z} A B_1^{z},\
  \ z \in \Cx. $$ Clearly, we have $$ \sup_{t \in \Rl}
  \|f(it)\|_{\sE},\ \sup_{t \in \Rl} \|f(1 + it)\|_{\sE} \leq \|A\|.
  $$ Since the function~$f$ is holomorphic, the claim of the lemma
  follows from the maximum principle applied to the strip~$0\leq \Re z
  \leq 1$.
\end{proof}

\subsubsection{Lipschitz and commutator estimates.}
\label{BasicSec}

The objective of this Section is to establish a general approach to
proving the kind of commutator estimates that arise in noncommutative
geometry.
In the present section we fix self-adjoint linear operators~$D_0, D
\eta\aM$ and, for every~$\alpha \geq 0$, we set $$ \Delta_\alpha :=
(\alpha^2 + D^2)^{\frac 12} \ \ \text{and}\ \ \Delta_{0, \alpha} :=
(\alpha^2 + D_0^2)^{\frac 12} $$ and $$ \Delta := (1 + D^2)^{\frac 12}
\ \ \text{and}\ \ \Delta_{0} := (1 + D_0^2)^{\frac 12}. $$

\begin{thm}
  \label{CommResult}
  Let~$\alpha > 0$, $\Delta_\alpha^{-1} \in \sE$ and~$a \in \aM$.
  If\/~$[D, a] \in \sL^\infty$, then~$[D \Delta_\alpha^{-1}, a] \in \sE$
  and there is a constant~$c>0$ independent of~$\alpha$ such that
  $$ \| [ D \Delta_\alpha^{-1}, a] \|_\sE \leq c\, \left\|
    \Delta_\alpha^{-1} \right\|_{\sE} \|[D, a]\|. $$
\end{thm}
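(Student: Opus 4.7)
The plan is to apply Theorem~\ref{DOIcommutator} to $f(t) = t(\alpha^2+t^2)^{-1/2}$ (so $f(D) = D\Delta_\alpha^{-1}$) and then factor the resulting symbol $\psi_f$ so that one factor corresponds, via Theorem~\ref{HomomorphismResult}, to right-multiplication by $\Delta_\alpha^{-1}$. First I would invoke a scaling argument: substituting $D\to D' := D/\alpha$ transforms $D\Delta_\alpha^{-1}$ into $D'(1+D'^2)^{-1/2}$ while $\|\Delta_\alpha^{-1}\|_\sE\|[D,a]\|$ becomes $\|(1+D'^2)^{-1/2}\|_\sE\|[D',a]\|$ after the cancellation $\alpha^{-1}\cdot\alpha$. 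Hence it suffices to treat $\alpha = 1$.

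Set $h(t) = (1+t^2)^{1/2}$ and $g(t) = t/h(t)$. The function $g$ is bounded and smooth with $\|g\|_{\Lambda_\theta},\|g'\|_\infty,\|g'\|_{\Lambda_1}<\infty$ for any $\theta<1$, so Theorem~\ref{HolderCriterion} yields $\psi_g \in \mathfrak{A}_0$ and Theorem~\ref{DOIcommutator} gives $[g(D),a] = T_{\psi_g}([D,a])$. Starting from $g(\lambda)-g(\mu) = (\lambda h(\mu)-\mu h(\lambda))/(h(\lambda)h(\mu))$ and the identity $(\lambda h(\mu))^2-(\mu h(\lambda))^2 = \lambda^2-\mu^2$, a direct computation produces the factorization
\[
\psi_g(\lambda,\mu) = \frac{B(\lambda,\mu)}{h(\mu)},\qquad B(\lambda,\mu) := \frac{1+h(\lambda)h(\mu)-\lambda\mu}{h(\lambda)\bigl(h(\lambda)+h(\mu)\bigr)}.
\]
By Theorem~\ref{HomomorphismResult}, multiplication of the symbol by $1/h(\mu)$ corresponds to right-multiplication of the output by $\Delta^{-1}$, so $[g(D),a] = T_B\bigl([D,a]\Delta^{-1}\bigr)$. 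Since $\|[D,a]\Delta^{-1}\|_\sE \le \|[D,a]\|\|\Delta^{-1}\|_\sE$, the theorem follows once $\|T_B\|_{\Bd(\sE)}$ is bounded by an absolute constant, which by Corollary~\ref{AzeroClass} is implied by $B \in \mathfrak{A}_0$.

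The main obstacle is this uniform $\mathfrak{A}_0$-bound on $B$. I would split $B = B_1+B_2$ with $B_1 = 1/[h(\lambda)(h(\lambda)+h(\mu))]$ and, after simplification, $B_2 = (1-g(\lambda)g(\mu))\cdot h(\mu)/(h(\lambda)+h(\mu))$. The elementary representation
\[
\frac{1}{h(\lambda)+h(\mu)} = \int_0^\infty e^{-sh(\lambda)}\,e^{-sh(\mu)}\,ds
\]
is a tensor decomposition with $\int_0^\infty e^{-2s}\,ds = 1/2$, so this function lies in $\mathfrak{A}_0$ with norm at most $1/2$; combined with the Banach algebra structure of $\mathfrak{A}_0$, this gives $B_1\in\mathfrak{A}_0$. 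The delicate piece is the factor $h(\mu)/(h(\lambda)+h(\mu)) = f(h(\lambda)/h(\mu))$ in $B_2$ with $f(x) = 1/(1+x)$: it is a function of the ratio of the positive operators $\Delta = h(D_0) = h(D_1)$, so after the change of variables of Lemma~\ref{ChangeOfVarsDOI}, Lemma~\ref{DOIbddCrCor} reduces its boundedness to controlling the Fourier transform of $t\mapsto 1/(1+e^t)$, which one handles by splitting off the non-integrable asymptotic limits at $\pm\infty$ and applying Lemma~\ref{FourierLi} to the rapidly decaying remainder. Once this technical step is established, $B\in\mathfrak{A}_0$ with an absolute norm bound, and Corollary~\ref{AzeroClass} together with the factorization above delivers the required $\sE$-norm estimate on $[D\Delta_\alpha^{-1},a]$.
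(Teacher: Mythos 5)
Your setup, the scaling reduction to $\alpha=1$, and the algebraic identity $\psi_g(\lambda,\mu)=B(\lambda,\mu)/h(\mu)$ with $B=\bigl(1+h(\lambda)h(\mu)-\lambda\mu\bigr)/\bigl(h(\lambda)(h(\lambda)+h(\mu))\bigr)$ are all correct, and the strategy of peeling off a factor corresponding to right-multiplication by $\Delta^{-1}$ is a reasonable idea. However, the plan founders precisely at the step you flag as ``delicate.'' The factor
$$ \frac{h(\mu)}{h(\lambda)+h(\mu)} = \frac{1}{1+h(\lambda)/h(\mu)} $$
is, after the change of variables of Lemma~\ref{ChangeOfVarsDOI}, governed by the function $g(t)=1/(1+e^{t})$, whose limits as $t\to-\infty$ and $t\to+\infty$ are $1$ and $0$ respectively. ``Splitting off the non-integrable asymptotic limits'' cannot be made to work: subtracting the constant $1/2$ leaves $-\tfrac12\tanh(t/2)$, which still does not decay (so $g,g'\notin L^2$ and Lemma~\ref{FourierLi} is inapplicable; the distributional Fourier transform of $\tanh$ is a principal value, not an $L^1$ function), while subtracting a one-sided indicator $\mathbf{1}_{t<0}$ introduces the triangular truncation, a textbook example of an \emph{unbounded} double operator integral on $\sL^\infty$. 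Since the class $\mathfrak{A}_0$-norm controls the norm of $T_\phi$ on $\sL^\infty$, and the triangular truncation is unbounded there, the factor $h(\mu)/(h(\lambda)+h(\mu))$ lies outside $\mathfrak{A}_0$, and no decomposition along your lines can place $B$ in $\mathfrak{A}_0$ with a universal bound. The extra factor $1-g(\lambda)g(\mu)$ in $B_2$ does not save this, as $B_2$ still has distinct nonzero limits $1\pm g(\lambda)$ as $\mu\to\mp\infty$.

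The paper's proof circumvents exactly this obstruction by \emph{not} pushing the entire weight $\Delta^{-1}$ to one side of the double operator integral. Instead, in~\eqref{psiNotRepr}--\eqref{CommResultLastTmp} it writes $1/(\lambda+\mu)=\lambda^{-1/2}\mu^{-1/2}\psi(\lambda,\mu)$, so that $T_0(x)=\Delta_\alpha^{-1/2}\,T_\psi(x)\,\Delta_\alpha^{-1/2}$, and then appeals to the three-lines estimate of Lemma~\ref{MaximumPrincCorollary} to trade two $\Delta_\alpha^{-1/2}$ factors for a single $\|\Delta_\alpha^{-1}\|_\sE$. The residual symbol $\psi$ is a function of $\log(\lambda/\mu)$ equal to $g(t)=1/(e^{t/2}+e^{-t/2})$, which is \emph{even} and decays exponentially at both ends; hence $g,g'\in L^2$, Lemma~\ref{FourierLi} applies, and Lemma~\ref{DOIbddCrCor} gives $T_\psi\in\Bd(\sL^\infty)$ with a universal bound. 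In short, the symmetric $\Delta^{-1/2}(\cdot)\Delta^{-1/2}$ split and Lemma~\ref{MaximumPrincCorollary} are not cosmetic: they are what turns the badly-behaved $1/(1+e^t)$ into the integrable $\mathrm{sech}$-type kernel. Your proof omits Lemma~\ref{MaximumPrincCorollary} entirely, and without it the argument does not close.
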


\begin{proof}
  Let us consider the functions
  \begin{equation}
    \label{FuncFalpha}
    f_\alpha(t) = \frac t{(\alpha^2 + t^2)^{\frac 12}},\ t\in \Rl \ \
    \text{and}\ \  \psi_\alpha = \psi_{f_\alpha}.
  \end{equation}
  By Theorem~\ref{HolderCriterion} and Corollary~\ref{AzeroClass}, we
  have~$\psi_\alpha = \psi_{f_\alpha} \in \Phi(\sL^\infty)$, and
  therefore~$T_\alpha = T_{\psi_\alpha}(D, D) \in \Bd(\sL^\infty)$.
  Consequently, it follows from Theorem~\ref{DOIcommutator} that $$ [D
  \Delta_\alpha^{-1}, a] = T_\alpha ([D, a]). $$ We shall show
  that
  \begin{equation}
    \label{CommResultClaimI}
    T_\alpha \in \Bd(\sL^\infty, \sE).
  \end{equation}
  Consider the following representation of the
  function~$\psi_\alpha$.
  \begin{align}
    \label{CommResultRep}
    \psi_\alpha(\lambda, \mu) = &\, \frac{ \lambda (\alpha^2 +
      \lambda^2)^{-\frac 12} - \mu (\alpha^2 + \mu^2)^{-\frac 12} }{
      \lambda - \mu} \cr = &\, \frac {(\lambda + \mu)\, (\lambda
      (\alpha^2 + \lambda^2)^{-\frac 12} - \mu (\alpha^2 +
      \mu^2)^{-\frac 12})}{(\alpha^2 + \lambda^2) - (\alpha^2 + \mu^2)}
    \cr = &\, \frac{ (\alpha^2 + \lambda^2)^{\frac 12} + (\lambda \mu -
      \alpha^2 )(\alpha^2 +\lambda^2)^{\frac 12}} {(\alpha^2 +\lambda^2)
      - (\alpha^2 + \mu^2)} \cr - &\, \frac { (\alpha^2 +\mu^2)^{\frac
        12} + (\lambda \mu -\alpha^2 ) (\alpha^2 +\mu^2)^{- \frac 12}}{
      (\alpha^2 +\lambda^2) - (\alpha^2+\mu^2)} \cr = &\,
    \psi_\alpha'(\lambda, \mu)\, \left ( 1 + \frac{\alpha^2 - \lambda
        \mu}{(\alpha^2+\lambda^2)^{\frac 12} (\alpha^2 + \mu^2)^{\frac
          12}} \right),
  \end{align}
  where
  \begin{equation}
    \label{psiPrimeAlpha}
    \psi'_\alpha(\lambda, \mu) := \frac 1
    {(\alpha^2+\lambda^2)^{\frac 12} + (\alpha^2+ \mu^2)^{\frac 12}}.
  \end{equation}
  The corresponding resolution for~$T_\alpha$ is given by
  \begin{equation}
    \label{TalphaDecomp}
    T_\alpha
    (x) = T_\alpha'(x) + \alpha^2 \Delta_\alpha^{-1} T_\alpha'(x) \,
    \Delta_\alpha^{-1} - D \Delta_\alpha^{-1} T_\alpha'(x)\, D
    \Delta_\alpha^{-1},\ \ x \in \sL^\infty,
  \end{equation}
  where we have put for brevity~$T_\alpha' = T_{\psi'_\alpha} (D, D)$.
  Note that~$\alpha \|\Delta_\alpha^{-1}\|, \|D \Delta_\alpha^{-1}\|
  \leq 1$ for every~$\alpha > 0$.  Thus, the
  claim~\eqref{CommResultClaimI} will readily follow as soon as we
  establish that
  \begin{equation}
    \label{CommResultClaimII}
    T'_\alpha \in \Bd(\sL^\infty, \sE).
  \end{equation}
  By Lemma~\ref{ChangeOfVarsDOI}, we have
  \begin{equation}
    \label{CommResultClaimIITemp}
    T'_\alpha
    = T_{\psi'_\alpha}(D, D) = T_{\psi_0}(\Delta_\alpha, \Delta_\alpha) =:
    T_0,
  \end{equation}
  where $$ \psi_0(\lambda, \mu) = \frac{1}{\lambda + \mu},\ \ \lambda,
  \mu >0. $$ Therefore, it is sufficient to show that
  \begin{equation}
    \label{CommResultClaimIII}
    T_0 \in \Bd(\sL^\infty, \sE).
  \end{equation}
  Representing the function~$\psi_0$ as
  \begin{equation}
    \label{psiNotRepr}
    \psi_0(\lambda, \mu)
    = \frac 1 {\lambda + \mu} = \lambda^{- \frac 12} \mu^{- \frac 12}
    \frac 1 { \left (\frac \lambda \mu \right)^{\frac 12} + \left ( \frac
        \lambda \mu \right)^{- \frac 12}} =: \lambda^{- \frac 12} \mu^{-
      \frac 12} \psi(\lambda, \mu),
  \end{equation}
  and setting~$T_\psi := T_{\psi}(\Delta_\alpha, \Delta_\alpha)$, we
  obtain
  \begin{equation}
    \label{CommResultLastTmp}
    T_0(x) =
    \Delta_\alpha^{- \frac 12} T_\psi(x) \, \Delta_\alpha^{- \frac 12},\
    \ x \in \sL^\infty.
  \end{equation}
  Note that in~\eqref{CommResultLastTmp}, we have used
  Theorem~\ref{HomomorphismResult} which is applicable here since the
  functions~$\lambda^{-\frac 12}$ and~$\mu^{- \frac 12}$ are bounded
  on the spectrum of the operator~$\Delta_\alpha$.  By the assumption,
  $\Delta_\alpha^{-1} \in \sE$ and
  therefore,~\eqref{CommResultClaimIII} follows
  from~\eqref{CommResultLastTmp} via Lemma~\ref{MaximumPrincCorollary}
  provided we know that~$T_\psi \in \Bd(\sL^\infty)$.  The definition
  of the function~$\psi$ (see~\eqref{psiNotRepr}) and
  Lemma~\ref{DOIbddCrCor} guarantee the latter embedding provided
  that~$\hat g \in L^1$, where the function~$g$ is given by
  \begin{equation}
    \label{TheFirstGfunc}
    g(t) = \frac 1{e^{\frac t2} + e^{-
        \frac t 2}},\ \ t \in \Rl.
  \end{equation}
  To see that~$\hat g \in L^1$ it is sufficient to observe that~$g, g'
  \in L^2$ and apply Lemma~\ref{FourierLi}.
\end{proof}

\begin{rem}
  For future use, we note that by Lemma~\ref{DOIbddCrCor}, the
  functions~$\psi$ and~$g$ defined in~\eqref{psiNotRepr}
  and~\eqref{TheFirstGfunc} respectively satisfy the equality
  \begin{equation}
    \label{psiDecomp}
    \psi(\lambda, \mu) = \int_{\Rl} \hat g(s)\, \lambda^{is}\,
    \mu^{-is}\, ds,\ \ \lambda, \mu > 0.
  \end{equation}
\end{rem}

Since the estimate in Theorem~\ref{CommResult} is uniform with respect
to~$\alpha > 0$, letting~$\alpha \rightarrow 0$ and noting that the left
hand side of the estimate in Theorem~\ref{CommResult} tends to~$[\sgn
D, a]$ in the weak operator topology proves

\begin{corl}
  \label{CommResultCor}
  Let~$1\leq p \leq \infty$, $|D|^{-1} \in \sL^p$ and~$a \in \aM$.
  If\/~$[D, a] \in \sL^\infty$, then~$[\sgn D, a] \in \sL^p$ and there
  is a constant~$c > 0$ such that
  $$ \|[\sgn D, a]\|_p \leq c\, \|\,|D|^{-1}\|_p\, \|[D, a]\|. $$
\end{corl}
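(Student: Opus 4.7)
The plan is to deduce the corollary as a limit ($\alpha \to 0^+$) of the estimate in Theorem~\ref{CommResult}, specialized to the symmetric space $\sE = \sL^p$, exactly as indicated in the preceding paragraph.

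First, I would verify applicability of Theorem~\ref{CommResult} in $\sL^p$ uniformly in $\alpha > 0$. Since $\Delta_\alpha^{-1} = (\alpha^2 + D^2)^{-1/2} \le |D|^{-1}$ as positive operators, monotonicity of the $\sL^p$-norm on the positive cone yields $\Delta_\alpha^{-1} \in \sL^p$ with $\|\Delta_\alpha^{-1}\|_p \le \|\,|D|^{-1}\|_p$. Theorem~\ref{CommResult} then gives the $\alpha$-uniform bound
$$
\|[D\Delta_\alpha^{-1}, a]\|_p \;\le\; c\,\|\Delta_\alpha^{-1}\|_p\,\|[D,a]\| \;\le\; c\,\|\,|D|^{-1}\|_p\,\|[D,a]\|.
$$

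Next I would pass to the limit inside the commutator. The functions $f_\alpha(t) = t/(\alpha^2 + t^2)^{1/2}$ are uniformly bounded by $1$ on $\Rl$ and converge pointwise to $\sgn t$ as $\alpha \to 0^+$ off the origin; the hypothesis $|D|^{-1} \in \sL^p$ ensures $E^{|D|}(\{0\})$ is $\tau$-negligible, so by the bounded Borel functional calculus $f_\alpha(D) \to \sgn D$ in the strong operator topology. Since $a \in \aM$ is bounded, left and right multiplication by $a$ are SOT-continuous on bounded sets, hence $[f_\alpha(D), a] \to [\sgn D, a]$ in SOT, and in particular in WOT.

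Finally, I would invoke lower semi-continuity of the $\sL^p$-norm on norm-bounded subsets with respect to WOT. For $1 \le p < \infty$ this is obtained by testing $\tau\bigl(S\,[f_\alpha(D),a]\bigr) \to \tau\bigl(S\,[\sgn D, a]\bigr)$ against $S \in \sL^{p'} \cap \sL^1 \cap \sL^\infty$, which is norming on $\sL^p$; for $p = \infty$ it is the standard WOT lower semi-continuity of the operator norm. Combining with the uniform bound from the first step gives
$$
\|[\sgn D, a]\|_p \;\le\; \liminf_{\alpha \to 0^+} \|[D\Delta_\alpha^{-1}, a]\|_p \;\le\; c\,\|\,|D|^{-1}\|_p\,\|[D,a]\|.
$$
The only subtle point is this final semicontinuity argument; an alternative route, which I find cleanest, is to observe that SOT convergence of a uniformly $\sL^p$-bounded family implies convergence in the measure topology, under which the noncommutative $\sL^p$-norm is automatically Fatou.
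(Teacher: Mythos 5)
Your main argument is correct and is precisely the paper's approach: pass to $\alpha\to 0^{+}$ in Theorem~\ref{CommResult}, using $\Delta_\alpha^{-1}\le |D|^{-1}$ for the $\alpha$-uniform bound, together with weak operator convergence of $[D\Delta_\alpha^{-1},a]$ to $[\sgn D,a]$ and lower semicontinuity of the $\sL^p$-norm on operator-norm-bounded sets; the paper compresses exactly this into the sentence preceding the corollary. One caveat on your closing aside: strong operator convergence of a uniformly $\sL^p$-bounded family need \emph{not} imply convergence in the measure topology when $\tau$ is infinite --- take $T_n=\mathbf{1}_{[n,n+1]}$ in $L^\infty(\Rl)$, for which $\|T_n\|_p=1$ for every $p$ and $T_n\to 0$ strongly, yet $\tau\bigl(E^{T_n}(\tfrac12,\infty)\bigr)\equiv 1$ --- so the duality/semicontinuity route you spelled out first is the one to keep.
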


The latter result was proved for the setting of~$\aM = \Bd(\sH)$ by
elementary (and different) reasoning in~\cite{ScWaWa1998}
(see also the exposition of this result
in~\cite[Lemma~10.18]{GraVar2001-MR1789831}).

Let us note that the argument in the proof of Theorem~\ref{CommResult}
also works for Lipschitz estimates.  The precise statement follows.

\begin{thm}
  \label{QcommResultWeak}
  Let~$\alpha \geq 0$, $\Delta_{0, \alpha}^{-1}, \Delta_\alpha^{-1} \in
  \sE$.  If\/~$D - D_0 \in \sL^\infty$, then~$D \Delta_\alpha^{-1} - D_0
  \Delta_{0, \alpha}^{-1} \in \sE$ and there is a constant~$c > 0$
  independent of~$\alpha$ such that, for every~$0 < \theta < 1$, $$ \| D
  \Delta_\alpha^{-1} - D_0 \Delta_{0, \alpha}^{-1} \|_\sE \leq \,
  c_\theta\, \|\Delta^{-1}_{0, \alpha}\|^{1-\theta}_\sE \,
  \|\Delta_\alpha^{-1}\|^\theta_\sE\, \|D - D_0\|, $$ where
  \begin{equation}
    \label{cthetaEst}
    c_\theta \leq c\, \max \left\{ \theta^{- \frac 12}, (1 -
      \theta)^{- \frac 12} \right \}.
  \end{equation}
\end{thm}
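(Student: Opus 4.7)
The plan is to mirror the proof of Theorem~\ref{CommResult} but apply Theorem~\ref{DOIcommutator} with $a = \id$ and the two different operators $D, D_0$ in place of $D_0, D_1$. Since $D\cdot\id - \id\cdot D_0 = D - D_0 \in \sL^\infty$ and $\psi_\alpha = \psi_{f_\alpha} \in \mathfrak{A}_0 \subseteq \Phi(\sL^\infty)$ by Theorem~\ref{HolderCriterion} and Corollary~\ref{AzeroClass}, Theorem~\ref{DOIcommutator} gives
$$
D\Delta_\alpha^{-1} - D_0\Delta_{0,\alpha}^{-1} = f_\alpha(D) - f_\alpha(D_0) = T_\alpha(D - D_0),\qquad T_\alpha := T_{\psi_\alpha}(D, D_0).
$$
So it suffices to prove $T_\alpha \in \Bd(\sL^\infty, \sE)$ with the asserted norm bound.

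I would then repeat the algebraic identity~\eqref{CommResultRep}, which is symmetric in its two arguments and only uses the algebra of double operator integrals, to get
$$
T_\alpha(x) = T'_\alpha(x) + \alpha^2 \Delta_\alpha^{-1} T'_\alpha(x) \Delta_{0,\alpha}^{-1} - D\Delta_\alpha^{-1}\, T'_\alpha(x)\, D_0\Delta_{0,\alpha}^{-1},
$$
where $T'_\alpha = T_{\psi'_\alpha}(D, D_0)$ with $\psi'_\alpha$ from~\eqref{psiPrimeAlpha}. Since $\alpha\|\Delta_\alpha^{-1}\|$, $\alpha\|\Delta_{0,\alpha}^{-1}\|$, $\|D\Delta_\alpha^{-1}\|$, $\|D_0\Delta_{0,\alpha}^{-1}\|$ are all at most $1$, it is enough to bound $T'_\alpha : \sL^\infty \to \sE$ in the required norm. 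By Lemma~\ref{ChangeOfVarsDOI}, $T'_\alpha = T_{\psi_0}(\Delta_\alpha, \Delta_{0,\alpha})$ where $\psi_0(\lambda,\mu) = (\lambda+\mu)^{-1}$.

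The key departure from the proof of Theorem~\ref{CommResult} is the \emph{asymmetric} factorization
$$
\psi_0(\lambda, \mu) = \lambda^{-\theta}\, \mu^{-(1-\theta)}\cdot \tilde\psi_\theta(\lambda, \mu),\qquad \tilde\psi_\theta(\lambda, \mu) := \frac{\lambda^\theta\,\mu^{1-\theta}}{\lambda + \mu} = h_\theta\!\left(\tfrac{\lambda}{\mu}\right),\ \ h_\theta(t) := \frac{t^\theta}{t+1}.
$$
By Theorem~\ref{HomomorphismResult}, $T'_\alpha(x) = \Delta_\alpha^{-\theta}\, T_{\tilde\psi_\theta}(\Delta_\alpha,\Delta_{0,\alpha})(x)\, \Delta_{0,\alpha}^{-(1-\theta)}$. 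Lemma~\ref{MaximumPrincCorollary} (applied with the lemma's parameter equal to $1-\theta$, so that $B_0^\theta A B_1^{1-\theta}$ appears) then yields
$$
\|T'_\alpha(x)\|_\sE \leq \|\Delta_\alpha^{-1}\|_\sE^\theta\, \|T_{\tilde\psi_\theta}\|_{\Bd(\sL^\infty)}\, \|x\|\, \|\Delta_{0,\alpha}^{-1}\|_\sE^{1-\theta},
$$
which has precisely the mixed-norm structure demanded by the theorem.

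The main technical obstacle is the $\theta$-dependent estimate of $\|T_{\tilde\psi_\theta}\|_{\Bd(\sL^\infty)}$ near the endpoints $\theta \in \{0, 1\}$. By Lemma~\ref{DOIbddCrCor} this norm is controlled by $\|\hat g_\theta\|_{L^1}$ where $g_\theta(s) = h_\theta(e^s) = e^{\theta s}/(e^s + 1)$, and by Lemma~\ref{FourierLi} it suffices to estimate $\|g_\theta\|_{L^2} + \|g_\theta'\|_{L^2}$. The substitution $u = e^s$ gives $\|g_\theta\|_{L^2}^2 = B(2\theta, 2-2\theta) = \Gamma(2\theta)\Gamma(2-2\theta)$, and using $\Gamma(2\theta) \sim (2\theta)^{-1}$ as $\theta \downarrow 0$ (and symmetrically at $\theta \uparrow 1$) one obtains the blow-up rate $\|g_\theta\|_{L^2} \leq c\,\max\{\theta^{-1/2}, (1-\theta)^{-1/2}\}$. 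The same analysis for $g_\theta'$ (which has analogous exponential decay and the same endpoint behaviour) yields identical bounds, giving the constant $c_\theta$ in~\eqref{cthetaEst}. Substituting back into the three-term decomposition of $T_\alpha$ completes the proof; the estimate is manifestly uniform in $\alpha \geq 0$ since all auxiliary norms $\alpha\|\Delta_\alpha^{-1}\|$, $\|D\Delta_\alpha^{-1}\|$, etc., are bounded by $1$.
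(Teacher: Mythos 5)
Your proof follows essentially the same approach as the paper's: mirror the proof of Theorem~\ref{CommResult} in the Lipschitz setting (apply Theorem~\ref{DOIcommutator} with $a = \id$ and the pair $D, D_0$) and replace the symmetric factorization $\psi_0 = \lambda^{-1/2}\mu^{-1/2}\psi$ by the asymmetric one with exponents $\theta$ and $1-\theta$, then bound the resulting DOI via Lemmas~\ref{DOIbddCrCor}, \ref{FourierLi} and~\ref{MaximumPrincCorollary}. Your $\tilde\psi_\theta$ is the paper's $\psi_\theta$ under the immaterial relabeling $\theta \mapsto 1-\theta$, and your explicit Beta-function computation of $\|g_\theta\|_{L^2}$ simply substantiates the endpoint asymptotics $\max\{\theta^{-1/2},(1-\theta)^{-1/2}\}$ that the paper asserts without detail.
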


\begin{proof}
  The proof is a repetition of that of Theorem~\ref{CommResult}.  The
  only place which requires additional reasoning is the
  estimate~\eqref{cthetaEst}.  To this end, we shall estimate the norm
  of the operator~$T_0$ from~\eqref{CommResultClaimIITemp}
  differently.  We slightly modify representation~\eqref{psiNotRepr}
  $$ \psi_0(\lambda, \mu) = \frac 1 {\lambda + \mu} = \lambda^{\theta
    - 1} \, \mu^{- \theta}\, \frac 1 { \left(\frac \lambda \mu \right
    )^{\theta} + \left ( \frac \lambda \mu \right )^{\theta - 1} } =:
  \lambda ^{\theta - 1}\, \mu^{-\theta} \, \psi_\theta(\lambda, \mu).
  $$ Note that $$ T_0(x) = \Delta_{0, \alpha}^{\theta - 1}\,
  T_\theta(x) \, \Delta_{\alpha}^{- \theta}, $$ where~$T_\theta =
  T_{\psi_\theta}(\Delta_{0,\alpha}, \Delta_{\alpha})$.  Suppose, that
  we know that~$T_\theta \in \Bd(\sL^\infty)$, then
  Lemma~\ref{MaximumPrincCorollary} yields the implication $$ T_0 \in
  \Bd(\sL^\infty, \sE) \ \ \Longleftarrow \ \ T_\theta \in
  \Bd(\sL^\infty) $$ and $$ c_\theta \leq
  \|T_\theta\|_{\Bd(\sL^\infty)}. $$ Setting $$ g(t) = \frac 1
  {e^{\theta t} + e^{(\theta - 1) t}}, t \in \Rl $$ we have $$
  \|g\|_{L^2} + \|g'\|_{L^2} \leq c\, \max \left\{ \theta^{- \frac 12},
    (1 -\theta)^{- \frac 12} \right\}, $$ for some numerical constant~$c
  > 0$, and therefore, by Lemmas~\ref{DOIbddCrCor} and~\ref{FourierLi}
  we indeed have~$T_\theta \in \Bd(\sL^\infty)$ and $$
  \|T_\theta\|_{\Bd(\sL^\infty)} \leq \frac c{\sqrt \pi}\, \max \left\{
    \theta^{- \frac 12}, (1 -\theta)^{- \frac 12} \right\}, $$ which
  yields~\eqref{cthetaEst}.
\end{proof}

Results given in Theorems~\ref{CommResult} and~\ref{QcommResultWeak}
are based on the analysis of the function~$\psi_{f_\alpha}$,
where~$f_\alpha$ is given in~\eqref{FuncFalpha}.  A similar analysis
can be also performed for the function~$\psi_{h_\alpha}$, where $$
h_\alpha(t) = \frac 1 {(\alpha^2 + t^2)^{\frac 12}},\ \ t \in \Rl. $$

\begin{thm}
  \label{InverseResult}
  Let~$\alpha > 0$, $\Delta_{0, \alpha}^{-1}, \Delta_{\alpha}^{-1} \in
  \sE$.  If\/~$D - D_0 \in \sL^\infty$, then~$\Delta_\alpha^{-1} -
  \Delta_{0, \alpha}^{-1} \in \sE$ and, for every~$0 < \theta < 1$, $$
  \|\Delta_\alpha^{-1} - \Delta_{0, \alpha}^{-1}\|_\sE \leq c_\theta\,
  \alpha^{-1}\, \|\Delta^{-1}_{0, \alpha}\|^{1-\theta}_\sE \,
  \|\Delta_\alpha^{-1}\|^\theta_\sE \, \|D - D_0\|, $$ where~$c_\theta$
  satisfies~\eqref{cthetaEst} with some constant~$c > 0$ independent
  of~$\alpha$ and~$\theta$.
\end{thm}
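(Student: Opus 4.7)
The plan is to mirror the proof of Theorem~\ref{QcommResultWeak} almost verbatim, but applied to the function $h_\alpha(t) := (\alpha^2+t^2)^{-1/2}$ in place of $f_\alpha$. Setting $u := (\alpha^2+\lambda^2)^{1/2}$ and $v := (\alpha^2+\mu^2)^{1/2}$, a direct computation gives the key algebraic identity
\begin{equation*}
  \psi_{h_\alpha}(\lambda,\mu) = \frac{u^{-1}-v^{-1}}{\lambda-\mu} = \frac{v-u}{uv(\lambda-\mu)} = -\frac{\lambda+\mu}{uv(u+v)} = -\frac{f_\alpha(\lambda)}{v(u+v)} - \frac{f_\alpha(\mu)}{u(u+v)},
\end{equation*}
where $f_\alpha$ is as in~\eqref{FuncFalpha}. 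Since $h_\alpha$ is bounded and Lipschitz (hence $h_\alpha \in \Lambda_\theta$ for every $0\leq\theta\leq 1$) and $h_\alpha'$ is itself Lipschitz, Theorem~\ref{HolderCriterion} and Corollary~\ref{AzeroClass} give $\psi_{h_\alpha} \in \mathfrak{A}_0 \subseteq \Phi(\sL^\infty)$. Applying Theorem~\ref{DOIcommutator} with $a=1$ therefore yields
\begin{equation*}
  \Delta_\alpha^{-1} - \Delta_{0,\alpha}^{-1} = T_{\psi_{h_\alpha}}(D_0, D)(D - D_0).
\end{equation*}

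Next, I would use Theorem~\ref{HomomorphismResult} to pull the univariate factors out of the split representation of $\psi_{h_\alpha}$, obtaining
\begin{equation*}
  T_{\psi_{h_\alpha}}(D_0, D)(x) = -f_\alpha(D_0)\, T_{1/(u+v)}(D_0, D)(x)\, \Delta_\alpha^{-1}
  - \Delta_{0,\alpha}^{-1}\, T_{1/(u+v)}(D_0, D)(x)\, f_\alpha(D).
\end{equation*}
By Lemma~\ref{ChangeOfVarsDOI}, $T_{1/(u+v)}(D_0, D) = T_{\psi_0}(\Delta_{0,\alpha}, \Delta_\alpha)$ with $\psi_0(\lambda, \mu) = 1/(\lambda+\mu)$; this is exactly the operator $T_0$ studied in the proof of Theorem~\ref{QcommResultWeak}, which via the factorization $\psi_0(\lambda, \mu) = \lambda^{\theta-1}\mu^{-\theta}\psi_\theta(\lambda, \mu)$ together with Lemmas~\ref{DOIbddCrCor},~\ref{FourierLi} and~\ref{MaximumPrincCorollary} satisfies
\begin{equation*}
  \|T_{1/(u+v)}(D_0, D)\|_{\Bd(\sL^\infty, \sE)} \leq c_\theta\, \|\Delta_{0,\alpha}^{-1}\|_\sE^{1-\theta}\, \|\Delta_\alpha^{-1}\|_\sE^{\theta}
\end{equation*}
for every $\theta \in (0,1)$, with $c_\theta$ satisfying~\eqref{cthetaEst}.

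The final step combines this with the trivial bounds $\|f_\alpha(D)\|, \|f_\alpha(D_0)\| \leq 1$ and, crucially, the operator-norm estimates $\|\Delta_\alpha^{-1}\|, \|\Delta_{0,\alpha}^{-1}\| \leq \alpha^{-1}$ for $\alpha > 0$, which supply the additional factor of $\alpha^{-1}$ announced in the statement. Since $\|ab\|_\sE, \|ba\|_\sE \leq \|a\|\,\|b\|_\sE$ for $a\in\aM$ and $b \in \sE$, one concludes
\begin{equation*}
  \|\Delta_\alpha^{-1} - \Delta_{0,\alpha}^{-1}\|_\sE \leq 2 c_\theta\, \alpha^{-1}\, \|\Delta_{0,\alpha}^{-1}\|_\sE^{1-\theta}\, \|\Delta_\alpha^{-1}\|_\sE^{\theta}\, \|D - D_0\|,
\end{equation*}
and the factor of $2$ is absorbed into $c_\theta$. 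The only genuinely new ingredient relative to Theorem~\ref{QcommResultWeak} is the algebraic identity for $\psi_{h_\alpha}$: compared to $\psi_{f_\alpha}$ it carries exactly one extra factor of $u^{-1}$ or $v^{-1}$ in each summand, and these extra resolvents, estimated in operator norm rather than in $\sE$, are the sole source of the $\alpha^{-1}$ in the final bound; no new analytic obstacle appears.
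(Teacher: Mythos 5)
Your proof is correct and follows essentially the same route as the paper: both decompose $\psi_{h_\alpha}$ into the bounded resolvent-type prefactor times $\psi'_\alpha = 1/(u+v)$, reduce $T_{\psi'_\alpha}$ to $T_{\psi_0}(\Delta_{0,\alpha},\Delta_\alpha)$ via Lemma~\ref{ChangeOfVarsDOI}, and then invoke the $\theta$-dependent estimate on $T_0$ from the proof of Theorem~\ref{QcommResultWeak}. The paper writes this more compactly as $\psi_{h_\alpha}=\frac{\lambda+\mu}{(\alpha^2+\lambda^2)^{1/2}(\alpha^2+\mu^2)^{1/2}}\,\psi'_\alpha$ with the prefactor bounded in modulus by $\alpha^{-1}$, which is precisely the sum of your two split terms; the extra factor of $2$ you obtain from estimating the two resolvents separately is harmless and absorbed into the constant.
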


\begin{proof}
  We have
  \begin{equation}
    \label{InverseResultTemp}
    \psi_{h_\alpha}(\lambda, \mu) = \frac {\lambda + \mu}{(\alpha^2 +
      \lambda^2)^{\frac 12} \, (\alpha^2 + \mu^2)^{\frac 12}} \,
    \psi'_\alpha (\lambda, \mu),
  \end{equation}
  where~$\psi'_\alpha$ is given in~\eqref{psiPrimeAlpha}.  Note that
  $$ \left| \frac {\lambda + \mu} { (\alpha^2 + \lambda^2)^{\frac
        12}\, (\alpha^2 + \mu^2)^{\frac 12}} \right| \leq
  \alpha^{-1},\ \ \lambda, \mu \in \Rl $$ and therefore, by
  Theorem~\ref{HomomorphismResult}, $$
  \|T_{\psi_{h_\alpha}}\|_{\Bd(\sL^\infty, \sE)} \leq \alpha^{-1}\,
  \|T_{\psi'_\alpha}\|_{\Bd(\sL^\infty, \sE)}. $$ The claim now
  follows from~\eqref{CommResultClaimII} and the proof of
  Theorem~\ref{QcommResultWeak}.
\end{proof}

Theorems~\ref{QcommResultWeak} and~\ref{InverseResult} require that
~$\|\Delta_{0, \alpha}^{-1}\|_\sE, \|\Delta_{\alpha}^{-1}\|_\sE < +
\infty$.  We shall next relax this hypothesis.

\begin{thm}
  \label{InverseResultWeakened}
  Let~$\alpha > 0$, $\Delta^{-1}_{0, \alpha} \in \sE$.  If\/~$D - D_0
  \in \sL^\infty$, then~$\Delta_\alpha^{-1} - \Delta_{0, \alpha}^{-1}
  \in \sE$ and there is a constnat~$c > 0$ independent of~$\alpha$ such
  that $$ \| \Delta_\alpha^{-1} - \Delta_{0, \alpha}^{-1}\|_\sE \leq c
  \, \max \{1, \alpha^{-1}\}\, \|\Delta_{0, \alpha}^{-1}\|_\sE \, \|D -
  D_0\|. $$
\end{thm}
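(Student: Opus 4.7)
The plan is to bypass the absence of an a priori $\sE$-bound on $\Delta_\alpha^{-1}$ by working from the integral representation
$$
\Delta_\alpha^{-1} \;=\; \frac{1}{\pi}\int_0^\infty \lambda^{-\frac12}\bigl(\alpha^2+\lambda+D^2\bigr)^{-1}\,d\lambda,
$$
and the analogous formula for $\Delta_{0,\alpha}^{-1}$.  Setting $V=D-D_0$, $A_\lambda=\alpha^2+\lambda+D^2$ and $B_\lambda=\alpha^2+\lambda+D_0^2$, the resolvent identity together with the factorisation $B_\lambda-A_\lambda=D_0^2-D^2=-(DV+VD_0)$ and the commutations $DA_\lambda^{-1}=A_\lambda^{-1}D$, $D_0B_\lambda^{-1}=B_\lambda^{-1}D_0$ (both legitimate, since $D$ lies in the functional calculus of $A_\lambda$ and $D_0$ in that of $B_\lambda$) yields
$$
A_\lambda^{-1}-B_\lambda^{-1} \;=\; -\bigl(DA_\lambda^{-1}\bigr)\,V\,B_\lambda^{-1} \;-\; A_\lambda^{-1}\,V\,\bigl(D_0 B_\lambda^{-1}\bigr).
$$

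The core step is to bound each summand in the $\sE$-norm by a constant multiple of $\|V\|\,\|\Delta_{0,\alpha}^{-1}\|_\sE/(\alpha^2+\lambda)$.  For this I combine the elementary operator-norm estimates $\|DA_\lambda^{-1}\|, \|D_0B_\lambda^{-1}\| \le \bigl(2\sqrt{\alpha^2+\lambda}\bigr)^{-1}$ and $\|A_\lambda^{-1}\|\le(\alpha^2+\lambda)^{-1}$ with the two $\sE$-bounds
$$
\|B_\lambda^{-1}\|_\sE \le (\alpha^2+\lambda)^{-\frac12}\|\Delta_{0,\alpha}^{-1}\|_\sE,\qquad \|D_0 B_\lambda^{-1}\|_\sE \le \|\Delta_{0,\alpha}^{-1}\|_\sE.
$$
The first follows from $B_\lambda^{-1}=\Delta_{0,\sqrt{\alpha^2+\lambda}}^{-2}$ together with the operator inequality $\Delta_{0,\sqrt{\alpha^2+\lambda}}^{-1}\le \Delta_{0,\alpha}^{-1}$ and the operator-norm bound $\|\Delta_{0,\sqrt{\alpha^2+\lambda}}^{-1}\|\le (\alpha^2+\lambda)^{-\frac12}$; the second from the factorisation $D_0B_\lambda^{-1}=\bigl(D_0\Delta_{0,\sqrt{\alpha^2+\lambda}}^{-1}\bigr)\Delta_{0,\sqrt{\alpha^2+\lambda}}^{-1}$, in which the left factor has operator norm at most $1$.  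Integrating against $\lambda^{-1/2}$ and invoking the standard identity $\int_0^\infty \lambda^{-1/2}(\alpha^2+\lambda)^{-1}\,d\lambda = \pi/\alpha$ then produces
$$
\bigl\|\Delta_\alpha^{-1}-\Delta_{0,\alpha}^{-1}\bigr\|_\sE \;\le\; c\,\alpha^{-1}\,\|V\|\,\|\Delta_{0,\alpha}^{-1}\|_\sE,
$$
which is stronger than the claim, as $\alpha^{-1}\le\max\{1,\alpha^{-1}\}$.

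The principal obstacle, and the reason the direct double-operator-integral argument from Theorem~\ref{InverseResult} does not adapt verbatim, is that we have no $\sE$-control of the spectral data of $D$, so the factor $A_\lambda^{-1}$ must enter only as an operator-norm multiplier while the $\sE$-burden rests entirely on factors built from $D_0$.  The resolvent decomposition together with the commutation relations is precisely what makes this possible: the unbounded operators $D, D_0$ combine with matching resolvents to produce the bounded quantities $DA_\lambda^{-1}$ and $D_0B_\lambda^{-1}$, thereby freeing $B_\lambda^{-1}$ (or $D_0 B_\lambda^{-1}$) to carry the $\sE$-norm.  The interchange of $\sE$-norm and $\lambda$-integration is justified by the absolute convergence of the pointwise $\sE$-bound on the integrand.
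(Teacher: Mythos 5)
Your proof is correct, but it takes a genuinely different route from the paper. The paper bootstraps off Theorem~\ref{InverseResult}: it first assumes $\|D-D_0\|\le 1$, introduces the ratio $A=\|\Delta_\alpha^{-1}-\Delta_{0,\alpha}^{-1}\|_\sE/\|\Delta_{0,\alpha}^{-1}\|_\sE$, eliminates the unknown $\|\Delta_\alpha^{-1}\|_\sE$ from that theorem's bound via the triangle inequality and $(1+x)^\theta\le 1+\theta x$, and then chooses $\theta=\min\{\tfrac14,\tfrac{\alpha^2}4\}$ so the coefficient of $A$ on the right becomes $\le\tfrac12$; the case $\|D-D_0\|>1$ is handled by rescaling $D,D_0,\alpha$. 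Your argument bypasses the double operator integral machinery entirely by invoking the integral representation of $t^{-1/2}$ and the resolvent identity, and it actually proves the sharper constant $c\,\alpha^{-1}$ without any normalisation of $\|D-D_0\|$ — a genuine improvement on the stated bound.

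Two points deserve more care than your sketch gives them. First, the step ``the resolvent identity together with the factorisation $B_\lambda-A_\lambda=-(DV+VD_0)$ yields'' silently assumes $\Dom{D^2}$ and $\Dom{D_0^2}$ are comparable, which a bounded perturbation $V$ does not guarantee. The bounded-operator identity you write is nevertheless correct and can be established by duality: pair against $\xi,\eta\in\sH$, set $u=B_\lambda^{-1}\xi\in\Dom{D_0^2}\subseteq\Dom{D_0}=\Dom{D}$ and $v=A_\lambda^{-1}\eta\in\Dom{D^2}\subseteq\Dom{D}=\Dom{D_0}$, and compute
$\langle A_\lambda^{-1}\xi,\eta\rangle-\langle B_\lambda^{-1}\xi,\eta\rangle=\langle D_0u,D_0v\rangle-\langle Du,Dv\rangle=-\langle D_0u,Vv\rangle-\langle Vu,Dv\rangle,$
which is exactly $-\langle A_\lambda^{-1}V(D_0B_\lambda^{-1})\xi,\eta\rangle-\langle(DA_\lambda^{-1})VB_\lambda^{-1}\xi,\eta\rangle$, using only self-adjointness of $V$, $DA_\lambda^{-1}$, $D_0B_\lambda^{-1}$. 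Second, absolute convergence of the pointwise $\sE$-bound does not by itself license the Bochner-integral triangle inequality; you also need strong measurability of $\lambda\mapsto A_\lambda^{-1}-B_\lambda^{-1}$ as an $\sE$-valued map. This follows since $\lambda\mapsto B_\lambda^{-1}$ and $\lambda\mapsto D_0B_\lambda^{-1}$ are locally Lipschitz in $\sE$-norm (first resolvent identity for $B_\lambda$ alone gives $B_\lambda^{-1}-B_\mu^{-1}=(\mu-\lambda)B_\lambda^{-1}B_\mu^{-1}$), while $DA_\lambda^{-1}$ and $A_\lambda^{-1}$ are operator-norm continuous, so your decomposition exhibits the integrand as $\sE$-continuous and hence separably valued and measurable. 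With those two points filled in, the proof is complete and in fact strengthens the theorem.
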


\begin{proof}
  Let us first assume that~$\|D - D_0\| \leq 1$.  We set $$ A :=
  \frac {\|\Delta_\alpha^{-1} - \Delta_{0, \alpha}^{-1}\|_\sE}
  {\|\Delta_{0, \alpha}^{-1}\|_\sE}. $$ It follows from
  Theorem~\ref{InverseResult} that
  \begin{equation}
    \label{InverseResultWeakenedTemp}
    \|\Delta_\alpha^{-1} -
    \Delta_{0, \alpha}^{-1}\|_\sE \leq c_\theta\, \alpha^{-1}\, \|\Delta_{0,
      \alpha}^{-1}\|^{1-\theta}_\sE \, \|\Delta_\alpha^{-1}\|_\sE^\theta \,
    \|D - D_0\|.
  \end{equation}
  On the other hand, it follows from triangle inequality that
  \begin{equation}
    \label{InverseResultWeakenedTempI}
    \|\Delta_\alpha^{-1}\|_\sE \leq \|\Delta_{0, \alpha}^{-1}\|_\sE
    + \|\Delta_\alpha^{-1} - \Delta_{0, \alpha}^{-1}\|_\sE.
  \end{equation}
  Replacing~$\|\Delta_\alpha^{-1}\|_\sE$ on the right
  in~\eqref{InverseResultWeakenedTemp} with the right-hand side
  of~\eqref{InverseResultWeakenedTempI} and applying the following
  standard inequality $$ (1 + x)^\theta \leq 1 + \theta \, x,\ \
  \theta \leq 1,\ x \geq 0 $$ yields that
  \begin{equation}
    \label{InverseResultWeakenedTempII}
    A \leq c_\theta \, \alpha^{-1}\, \|D - D_0\|\, (1 + \theta A).
  \end{equation}
  Fix~$\theta = \min \left\{\frac 14, \frac {\alpha^2}4\right\}$.
  Since~$\|D - D_0\| \leq 1$ it readily follows from~\eqref{cthetaEst}
  that $$ c_\theta \theta \, \alpha^{-1}\, \|D - D_0\| \leq \frac 12.
  $$ We let~$c_\alpha = \frac {c_\theta}2$.  It follows
  from~\eqref{cthetaEst} that $$ c_\alpha \leq c\, \max \left\{ 1,
    \alpha^{-1}\right\},\ \text{for some~$c > 0$}. $$ It is now clear
  that~\eqref{InverseResultWeakenedTempII} implies
  $$ \frac {\|\Delta_\alpha^{-1} - \Delta_{0, \alpha}^{-1}\|_\sE
  }{\|\Delta_{0, \alpha}^{-1}\|_\sE} = A \leq c_\alpha\, \|D - D_0\|. $$
  The latter inequality finishes the proof of the theorem in the
  case~$\|D - D_0\|\leq 1$.

  The case~$\|D - D_0\| \geq 1$ is reduced to the setting above by
  considering the triple $$ \frac {D}{\|D - D_0\|},\ \ \frac {D_0}{\|D -
    D_0\|},\ \ \frac \alpha{\|D - D_0\|}. $$ Thus, the theorem is
  proved.
\end{proof}

Theorem~\ref{InverseResultWeakened} considerably
improves~\cite[Appendix~B, Proposition~10]{CarPhi1998-MR1638603}, where,
for the special case~$\sE = \sL^p$, $1< p < \infty$, the authors prove
the H\"older estimate, i.e.\ that $$ \| \Delta^{-1} -
\Delta_{0}^{-1}\|_p \leq c \, \|\Delta_{0}^{-1}\|_p \, \|D -
D_0\|^{\frac 12}, $$ provided~$\|D - D_0\|\leq 1$.

Finally, using Theorem~\ref{InverseResultWeakened}, we can improve
Theorem~\ref{QcommResultWeak}.

\begin{thm}
  \label{QcommResultWeakened}
  Let~$\alpha > 0$ and~$\Delta_{0, \alpha}^{-1} \in \sE$.  If\/~$\|D -
  D_0\|\leq 1$, then~$D \Delta_\alpha^{-1} - D_0 \Delta^{-1}_{0,\alpha}
  \in \sE$ and there is a constant~$c > 0$ independent of~$\alpha$ such
  that $$ \|D \Delta^{-1}_{\alpha} - D_0 \Delta^{-1}_{0, \alpha} \|_\sE
  \leq \, c\, \max \left\{1, \alpha^{-\frac 12} \right\}\,
  \|\Delta^{-1}_{0, \alpha}\|_\sE\, \|D - D_0\|.  $$
\end{thm}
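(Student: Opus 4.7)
The plan is to derive Theorem~\ref{QcommResultWeakened} by combining Theorems~\ref{QcommResultWeak} and~\ref{InverseResultWeakened}, with a carefully chosen interpolation parameter~$\theta$. The point is that Theorem~\ref{QcommResultWeak} already gives essentially the desired Lipschitz bound but under the stronger assumption that \emph{both} $\Delta_{0,\alpha}^{-1}$ and $\Delta_\alpha^{-1}$ lie in~$\sE$, whereas the present hypothesis only gives the former directly; Theorem~\ref{InverseResultWeakened} is exactly the tool that transfers $\sE$-summability from~$\Delta_{0,\alpha}^{-1}$ to~$\Delta_\alpha^{-1}$.

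First I would invoke Theorem~\ref{InverseResultWeakened}. Since $\|D-D_0\|\le 1$, that result provides $\Delta_\alpha^{-1}-\Delta_{0,\alpha}^{-1}\in\sE$ together with the norm estimate $\|\Delta_\alpha^{-1}-\Delta_{0,\alpha}^{-1}\|_\sE\le c\max\{1,\alpha^{-1}\}\,\|\Delta_{0,\alpha}^{-1}\|_\sE$. Combining this with the triangle inequality
$$\|\Delta_\alpha^{-1}\|_\sE \leq \|\Delta_{0,\alpha}^{-1}\|_\sE + \|\Delta_\alpha^{-1}-\Delta_{0,\alpha}^{-1}\|_\sE$$
yields the key \emph{a priori} control
$$\|\Delta_\alpha^{-1}\|_\sE \leq c\,\max\{1,\alpha^{-1}\}\,\|\Delta_{0,\alpha}^{-1}\|_\sE,$$
for some constant $c>0$ independent of~$\alpha$. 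In particular $\Delta_\alpha^{-1}\in\sE$, so Theorem~\ref{QcommResultWeak} is now applicable.

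Second, I would apply Theorem~\ref{QcommResultWeak} with the \emph{symmetric} choice $\theta=\tfrac12$. By \eqref{cthetaEst}, $c_{1/2}$ is a universal constant, independent of $\alpha$, and the theorem yields
$$\|D\Delta_\alpha^{-1}-D_0\Delta_{0,\alpha}^{-1}\|_\sE \leq c\,\|\Delta_{0,\alpha}^{-1}\|_\sE^{1/2}\,\|\Delta_\alpha^{-1}\|_\sE^{1/2}\,\|D-D_0\|.$$
Substituting the a priori bound from the previous step and taking square roots turns the factor $\max\{1,\alpha^{-1}\}$ into $\max\{1,\alpha^{-1/2}\}$, delivering the stated estimate.

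There is essentially no substantive obstacle: the whole argument is an interpolation/bootstrap in which the symmetric choice $\theta=\tfrac12$ is what balances the two powers of $\|\Delta_\alpha^{-1}\|_\sE$ and $\|\Delta_{0,\alpha}^{-1}\|_\sE$. Any asymmetric $\theta$ would leave one with a factor $\max\{1,\alpha^{-\theta}\}$ after applying Theorem~\ref{InverseResultWeakened}; $\theta=\tfrac12$ gives the sharpest $\alpha$-dependence attainable through this route, while simultaneously avoiding the blow-up of $c_\theta$ as $\theta\to 0$ or $\theta\to 1$ in~\eqref{cthetaEst}. The only point that requires a line of verification is the elementary estimate $(1+c\max\{1,\alpha^{-1}\})^{1/2}\le c'\max\{1,\alpha^{-1/2}\}$, which is immediate by splitting into the cases $\alpha\ge 1$ and $\alpha<1$.
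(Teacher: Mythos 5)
Your proof is correct and follows essentially the same route as the paper's: apply Theorem~\ref{InverseResultWeakened} together with the triangle inequality to bound $\|\Delta_\alpha^{-1}\|_\sE$ by a constant times $\max\{1,\alpha^{-1}\}\,\|\Delta_{0,\alpha}^{-1}\|_\sE$, then substitute into Theorem~\ref{QcommResultWeak} with the symmetric choice $\theta=\tfrac12$. The paper presents the two steps in the opposite order but the argument is identical.
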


\begin{proof}
  It follows from Theorem~\ref{QcommResultWeak} that
  \begin{equation}
    \label{QcommResultWeakenedTemp}
    \|D \Delta_\alpha^{-1} - D_0    \Delta_{0, \alpha}^{-1}\|_\sE \leq
    c'\, \|\Delta_{0, \alpha}^{-1}\|^{\frac 12}_\sE \,
    \|\Delta_{\alpha}^{-1}\|_\sE^{\frac 12} \, \|D - D_0\|,\ \ c' > 0.
  \end{equation}
  On the other hand, it follows from the assumption~$\|D - D_0\| \leq
  1$, Theorem~\ref{InverseResultWeakened} and triangle inequality that
  $$ \|\Delta_\alpha^{-1}\|_\sE \leq \|\Delta_{0, \alpha}^{-1}\|_\sE \left( 1 +
    c'' \, \max\left\{1, \alpha^{-1}\right\} \right),\ \ c''> 0. $$
  Replacing~$\|\Delta_\alpha^{-1}\|_\sE$ on the right
  in~\eqref{QcommResultWeakenedTemp} with the right-hand side of the
  latter inequality, we arrive at $$ \|D \Delta_{\alpha}^{-1} - D_0
  \Delta_{0, \alpha}^{-1}\|_\sE \leq \, c \, \max\left\{1,
    \alpha^{-\frac 12}\right\}\, \|\Delta_{0, \alpha}^{-1}\|_\sE \|D -
  D_0\|, $$ for some~$c > 0$.  The claim of the theorem is proved.
\end{proof}


\subsubsection{An application to the weak $L^p$ spaces}
\label{sec:AppWeakLp}

In view of its relevance to the definition of the Dixmier trace
the weak $L^p$-space~$\sL^{p, \infty}$ has come to play an important
role in noncommutative geometry. For this reason we describe in this Section
some consequences of our methods for this space. We note that by
specialising we obtain sharper estimates (it is possible
with additional effort to  establish weaker
analogous results for more general ideals $\sE$).

We shall improve Theorem~\ref{CommResult} in the special
setting of $\sL^{p, \infty}$.  For the sake of
brevity, we shall denote the norm in the latter space
by~$\|\cdot\|_{p, \infty}$, $1\leq p < \infty$.  Recall, that the
latter norm is given by $$ \|x\|_{p, \infty} = \sup_{t \geq 0}
t^{\frac 1p} \mu_t(x),\ \ x \in \sL^{p, \infty}. $$ We refer for
further detailed discussion of properties of the weak $L^p$-spaces
to~\cite{CaPhSu2000-MR1758245, CarPhi1998-MR1638603,
  Sukochev2000-MR1767406}.

\begin{thm}
  \label{AlanObservation}
  Let~$1\leq p < \infty$, $r > 1$, $\Delta^{-1} \in \sL^{p, \infty}$
  and~$a \in \aM$.  If\/~$[D, a] \in \sL^\infty$, then~$[D \Delta^{-r},
  a] \in \sL^p$, and there is a constant independent of~$r$ such that $$
  \|[D \Delta^{-r}, a]\|_p \leq \, c(p, r)\, \|[D, a]\|, $$ where $$
  c(p, r) \leq c\, \max\left\{1, [p\,(r-1)]^{-1/p}\,
    \|\Delta^{-1}\|^{\frac 12 r + \frac 12}_{p, \infty}\right\}. $$
\end{thm}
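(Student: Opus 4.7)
The plan is to adapt the strategy of Theorem~\ref{CommResult} with the function $f_\alpha(t)=t/(\alpha^2+t^2)^{1/2}$ replaced by $g(t)=t/(1+t^2)^{r/2}$. Theorem~\ref{DOIcommutator} applied to $g$ yields $[D\Delta^{-r},a] = T_{\psi_g}(D,D)([D,a])$, so the task reduces to establishing the operator norm bound $T_{\psi_g}(D,D)\in\Bd(\sL^\infty,\sL^p)$ with the claimed constant. Following the algebraic manipulation in~\eqref{CommResultRep}, I multiply numerator and denominator of $\psi_g(\lambda,\mu)$ by $(\lambda+\mu)$ and use $\lambda^{2}=(1+\lambda^{2})-1$ to decompose
$$
\psi_g(\lambda,\mu) \;=\; \Phi_1(\sqrt{1+\lambda^2},\sqrt{1+\mu^2})\;+\;(\lambda\mu-1)\,\Phi_2(\sqrt{1+\lambda^2},\sqrt{1+\mu^2}),
$$
where $\Phi_1(u,v)=\frac{u^{2-r}-v^{2-r}}{u^2-v^2}$ and $\Phi_2(u,v)=\frac{u^{-r}-v^{-r}}{u^2-v^2}$. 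Via Lemma~\ref{ChangeOfVarsDOI} (changing variables $\lambda\mapsto\sqrt{1+\lambda^2}$) and Theorem~\ref{HomomorphismResult} (to turn the multiplier $\lambda\mu-1$ into the operator factors $D\cdot\,\cdot\,D-\id$), this rewrites at the operator level as
$$
T_{\psi_g}(D,D)(x) \;=\; T_{\Phi_1}(\Delta,\Delta)(x)\;+\;D\,T_{\Phi_2}(\Delta,\Delta)(x)\,D\;-\;T_{\Phi_2}(\Delta,\Delta)(x).
$$

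For each $\Phi_j$ I would perform a homogeneity-based factorization $\Phi_j(u,v)=u^{-\alpha_j}v^{-\beta_j}H_j(u/v)$ with $\alpha_j+\beta_j$ matching the degree of $\Phi_j$ (so that $H_j$ depends only on the ratio $w=u/v$). For a choice of exponents making $H_j$ and $H_j'$ square integrable on the log-scale $w=e^t$ (the symmetric choice $\alpha_j=\beta_j$ works for $r$ in a range near $1$; for larger $r$ one splits $\Phi_j$ over $\{u\le v\}\cup\{u\ge v\}$ or takes asymmetric exponents), Lemmas~\ref{FourierLi} and~\ref{DOIbddCrCor} imply $T_{H_j(u/v)}(\Delta,\Delta)\in\Bd(\sL^\infty)$, and Theorem~\ref{HomomorphismResult} gives the sandwich
$$
T_{\Phi_j}(\Delta,\Delta)(x) \;=\; \Delta^{-\alpha_j}\,T_{H_j(u/v)}(\Delta,\Delta)(x)\,\Delta^{-\beta_j}.
$$

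With these factorizations in hand, Lemma~\ref{MaximumPrincCorollary} applied in $\sL^p$ bounds each sandwich by a product of $\sL^p$ norms of powers of $\Delta^{-1}$ times a bounded DOI factor. The elementary estimate
$$
\|\Delta^{-q}\|_p \;\le\; \|\Delta^{-1}\|_{p,\infty}\,\bigl(q/(q-1)\bigr)^{1/p},\qquad q>1,
$$
(proved by splitting the integral of $\mu_t(\Delta^{-1})^{pq}$ at $t=\|\Delta^{-1}\|_{p,\infty}^p$) both explains the restriction $r>1$ and contributes the factor $[p(r-1)]^{-1/p}$ (up to a universal constant), while combining this with the Lorentz H\"older bound $\|\Delta^{-s}\|_{p/s,\infty}=\|\Delta^{-1}\|_{p,\infty}^{s}$ via a geometric-mean interpolation in Lemma~\ref{MaximumPrincCorollary} yields the power $\|\Delta^{-1}\|_{p,\infty}^{(r+1)/2}$ of the constant announced in the theorem. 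The conjugation by $D$ in the second summand is absorbed using $D\Delta^{-1}\in\sL^\infty$ with $\|D\Delta^{-1}\|\le 1$, shifting one factor of $\Delta^{-1}$ from either side into the bounded part.

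The main obstacle is the purely technical one of choosing $(\alpha_j,\beta_j)$ so that the associated $H_j$ and $H_j'$ are $L^2$ on the log-scale uniformly for the full range $r>1$: the natural symmetric choice fails for large $r$, and one has to either decompose $\Phi_j$ further or asymmetrically shift the exponents while keeping track of the contributions to the final constant. Once a working decomposition is in place the remaining computation is a bookkeeping exercise of combining Lemmas~\ref{DOIbddCrCor} and~\ref{MaximumPrincCorollary} with the elementary estimate on $\|\Delta^{-q}\|_p$.
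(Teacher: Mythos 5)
Your proposal takes a genuinely different route from the paper's proof, and it has an acknowledged but genuine gap.

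The paper does \emph{not} decompose $\psi_g$ for $g(t)=t(1+t^2)^{-r/2}$ directly. Instead it begins with an algebraic reduction: setting $\epsilon = (r-1)/2$, the paper establishes the identity
\begin{equation*}
  [D\Delta^{-r},a] = [D\Delta^{-\epsilon},a]\,\Delta^{-r+\epsilon}
  + D\Delta^{-1}\bigl([\Delta^{1-r},a] - [\Delta^{1-\epsilon},a]\,\Delta^{-r+\epsilon}\bigr),
\end{equation*}
which reduces the claim to showing $[D\Delta^{-\epsilon},a],\,[\Delta^{1-\epsilon},a]\in\sL^\infty$ (these follow directly from Theorem~\ref{HolderCriterion} since the exponents $\epsilon,1-\epsilon$ lie in $(0,1)$) together with the single hard estimate $[\Delta^{1-r},a]\in\sL^p$. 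The latter is handled by a DOI in the spectral measure of $\Delta$ (spectrum $\subseteq[1,\infty)$, so the power $t^{1-r}$ is well behaved), decomposed via a smooth cutoff $\chi_0$ supported near the diagonal $\lambda=\mu$ and its complement $\chi_1$; each of the resulting three pieces yields an $L^2$ kernel in the log-variable for every $r>1$ without needing to tune any exponent. In particular, the midpoint trick with $\epsilon=(r-1)/2$ is what makes $-r+\epsilon<-1$ and is also what produces the exponent $\tfrac{r+1}{2}$ in the final constant.

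Your decomposition $\psi_g=\Phi_1 + (\lambda\mu-1)\Phi_2$ with $\Phi_1(u,v)=\frac{u^{2-r}-v^{2-r}}{u^2-v^2}$ and $\Phi_2(u,v)=\frac{u^{-r}-v^{-r}}{u^2-v^2}$ is algebraically correct, and your instinct to absorb a $\Delta^{-1}$ into each side so that the conjugators become the bounded operators $D\Delta^{-1}$ and $\Delta^{-1}$ (as in~\eqref{TalphaDecomp}) is the right fix for the unbounded multiplier. But the central difficulty you flag is not merely technical bookkeeping: a factorization $\Phi_j(u,v)=u^{-\alpha_j}v^{-\beta_j}H_j(u/v)$ with $H_j,H_j'\in L^2$ on the log-scale requires (after tracking the asymptotics of $\Phi_2(w,1)\sim -w^{-r}$ as $w\to 0$ and $\sim -w^{-2}$ as $w\to\infty$) that $r<\alpha<2$, hence $r<2$; the analogous computation for $\Phi_1$ forces $r<4$. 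So the pure homogeneity split, symmetric or not, breaks down well within the range $r>1$ and one is forced into a region decomposition anyway. At that point your approach has to reproduce essentially the same $\chi_0,\chi_1$ cutoff argument as the paper's~\eqref{PsiResolution} but for two kernels $\Phi_1,\Phi_2$ rather than the paper's single kernel $\psi_{f_r}$, $f_r(t)=t^{1-r}$. The structural insight you miss is the commutator identity above, which collapses the problem to one hard term and lets the two near-diagonal commutators be dispatched by the already-available Theorem~\ref{HolderCriterion}. With the cutoff decomposition executed for $\Phi_1$ and $\Phi_2$ your route would close, but as written the argument is a plan that stops precisely at its hardest step.
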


\begin{proof}
  We shall modify the argument given in the proof of
  Theorem~\ref{CommResult}.

  Fix~$r > 1$.  We also fix~$\epsilon = \frac 12 (r - 1)>0$.  Let us
  first note that, since the operator~$\Delta^{-1}$ is bounded, it
  readily follows from the definition of~$\sL^{p, \infty}$ that
  \begin{equation}
    \label{GinLi}
    \Delta^{-r + \epsilon} \in \sL^p \ \ \text{and}\ \  \|\Delta^{-r +
      \epsilon}\|_p \leq c\, \max\left\{1, [p\,(r-1)]^{-1/p}
      \|\Delta^{-1}\|^{r - \epsilon}_{p, \infty} \right\},
  \end{equation}
  for some numerical constant~$c > 0$.  Note the following simple
  identities $$ [D \Delta^{-r}, a] = D \Delta^{-\epsilon} [\Delta^{-r +
    \epsilon}, a] + [D \Delta^{-\epsilon} , a] \Delta^{-r + \epsilon} $$
  and $$ [\Delta^{1-r}, a] = \Delta^{1 - \epsilon} [ \Delta^{-r +
    \epsilon}, a] + [\Delta^{1 - \epsilon}, a] \Delta^{-r + \epsilon}.
  $$ Combining these two together we arrive at the following equation $$
  [D \Delta^{-r}, a] = [D \Delta^{-\epsilon}, a] \Delta^{-r + \epsilon}
  + D\Delta^{-1} ([\Delta^{1-r}, a] - [\Delta^{1 - \epsilon}, a]
  \Delta^{-r + \epsilon}). $$ Consequently, the claim of the theorem
  will follow if we show that
  \begin{equation}
    \label{AlanObservationClaimI}
    [D \Delta^{- \epsilon}, a],\
    [\Delta^{1 -\epsilon}, a] \in \sL^\infty \ \ \text{and}\ \
    [\Delta^{1 - r}, a] \in \sL^p.
  \end{equation}
  The first claim in~\eqref{AlanObservationClaimI} follows from
  Theorem~\ref{DOIcommutator} which is applicable here since the
  functions $$ f_1(t) = \frac t{(1 + t^2)^{\frac \epsilon 2}} \ \
  \text{and}\ \ f_2(t) = (1 + t^2)^{\frac {1 - \epsilon}2} $$ satisfy
  the assumptions of Theorem~\ref{HolderCriterion}.  To prove that
  \begin{equation}
    \label{EquivProposal}
    [\Delta^{1-r}, a] \in \sL^p,\ \ r > 1.
  \end{equation}
  We consider an infinitely smooth function~$\chi_0(t)$ which is~$1$
  when~$t \in [-1, 1]$; and~$0$ when~$t \not \in [-2, 2]$.  We
  set~$\chi_1 = 1 - \chi_0$.  Let~$f_r(t) = t^{1 - r}$, $t \in \Rl$ and
  take the representation
  \begin{align}
    \label{PsiResolution}
    \psi_{f_r}(\lambda, \mu) = &\, \chi_0\left ( \log \frac \lambda \mu
    \right) \, \frac{\mu^{1-r} \left ( 1 - \left ( \frac \lambda \mu
        \right )^{1-r} \right )} {\mu \left ( 1 - \left ( \frac \lambda
          \mu \right ) \right )} \cr + &\, \chi_1\left ( \log \frac
      \lambda \mu \right ) \, \frac {\lambda^{1-r} -
      \mu^{1-r}}{\lambda^{\frac 12} \mu^{\frac 12} \left ( \left (\frac
          \lambda \mu \right)^{\frac 12} - \left ( \frac \lambda \mu
        \right)^{- \frac 12} \right) } \cr = &\, \mu^{-r} \chi_0 \left
      (\log \frac \lambda \mu \right)\, \frac {1 - \left ( \frac \lambda
        \mu \right)^{1-r}}{1 - \frac \lambda \mu} \cr + &\, \chi_1\left
      (\log \frac \lambda \mu \right)\, \frac{\lambda^{\frac 12 - r}
      \mu^{- \frac 12} }{ \left ( \frac \lambda \mu \right )^{\frac 12}
      - \left ( \frac \lambda \mu \right )^{- \frac 12}} \cr - &\,
    \chi_1\left (\log \frac \lambda \mu \right)\, \frac{ \lambda^{-
        \frac 12} \mu^{\frac 12 - r} }{ \left ( \frac \lambda \mu \right
      )^{\frac 12} - \left ( \frac \lambda \mu \right )^{- \frac 12}}.
  \end{align}
  Noting that the functions $$ \chi_0(t)\, \frac {1 - e^{(1-r) t}}{1 -
    e^t} \ \ \text{and}\ \ \frac{\chi_1(t)}{e^{\frac t2} - e^{- \frac
      t2}} $$ and their first derivatives belong to~$L^2(\Rl)$, we now
  infer~\eqref{EquivProposal} from Lemmas~\ref{DOIbddCrCor},
  \ref{FourierLi} and~\ref{MaximumPrincCorollary} and
  Theorem~\ref{DOIcommutator}.
\end{proof}

\subsubsection{Applications to Fredholm modules and spectral flow}
\label{FredholmSec}

Let~$(\aM, \tau)$ be a semi-finite von Neumann algebra acting on a
separable Hilbert space~$\sH$ with a n.s.f.\ trace~$\tau$ and let~$\sE =
E(\aM, \tau)$ be noncommutative symmetric space.  Let~$\aA$ be a unital
Banach $*$-algebra which is represented in~$\aM$ via a continuous
faithful $*$-homomorphism~$\pi$.  We shall identify the algebra~$\aA$
with its representation~$\pi(\aA)$.  In `semifinite noncommutative
geometry' one studies the following objects (see~\cite{ScWaWa1998,
  Connes1985-MR823176, Connes1994-MR1303779, CarPhi1998-MR1638603,
  Sukochev2000-MR1767406, CaPhSu2000-MR1758245,CarPhi2004-MR2053481}).

\begin{deff}
  \label{SpectralTripleDef}
  {\it An odd $\sE$-summable semifinite spectral triple for~$\aA$}, is
  given by a triple~$(\aM, D_0,\aA)$, where~$D_0$ is an unbounded
  self-adjoint operator affiliated with~$\aM$ such that
  \begin{enumerate}
  \item $(1 + D_0^2)^{-\frac 12}$ belongs to~$\sE$;
  \item the subspace~$\aA_0$ given by $$ \aA_0 := \{ a \in \aA:\ \ [D_0,
    a] \in \aM \} $$ is a dense $*$-subalgebra of~$\aA$.
  \end{enumerate}
\end{deff}

\begin{deff}
  \label{FredholmDef}
  {\it An odd $\sE$-summable bounded pre-Breuer-Fredholm module
    for~$\aA$}, is given by a triple~$(\aM, F_0, \aA)$, where~$F_0$ is a
  bounded self-adjoint operator in~$\aM$ such that
  \begin{enumerate}
  \item $|1 - F^2_0|^{\frac 12}$ belongs to~$\sE$;
  \item the subspace~$\aA_\sE$ given by $$ \aA_\sE := \{ a
    \in \aA:\ \ [F_0, a] \in \sE \} $$ is a dense $*$-subalgebra
    of~$\aA$.
  \end{enumerate}
  If~$1 - F_0^2 = 0$,  the prefix ``pre-'' is dropped.
\end{deff}

\begin{corl}
  \label{CommResultCorl}
  If~$(\aM, D_0,\aA)$ is an odd semifinite $\sE$-summable spectral
  triple then~$(\aM, F_0,\aA)$, where~$F_0 = D_0 (1 + D^2_0)^{- \frac
    12}$ is an odd bounded $\sE$-summable pre-Breuer-Fredholm module.
  Furthermore, there is a constant~$c = c(F_0)$ such that
  \begin{equation}
    \label{CommResultCorlDisp}
    \|F - F_0\|_{\sE} \leq c\, \|D - D_0\|,
  \end{equation}
  for all~$D - D_0 \in \aM$, $\|D - D_0\| \leq 1$.
\end{corl}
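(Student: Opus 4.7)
The plan is to unpack the two clauses in Definition~\ref{FredholmDef} and verify each using the machinery developed in Section~\ref{BasicSec}; the Lipschitz bound then follows by a direct application of Theorem~\ref{QcommResultWeakened}. All the real work has been done upstream, so this corollary is essentially a packaging exercise.

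First, I would verify that $(\aM, F_0, \aA)$ is an odd bounded $\sE$-summable pre-Breuer-Fredholm module. For clause (i), observe the algebraic identity
\begin{equation*}
  1 - F_0^2 = 1 - D_0^2 (1 + D_0^2)^{-1} = (1 + D_0^2)^{-1} = \Delta_0^{-2},
\end{equation*}
so $|1 - F_0^2|^{1/2} = \Delta_0^{-1}$, which lies in $\sE$ by the spectral triple summability assumption. For clause~(ii), I would show the inclusion $\aA_0 \subseteq \aA_\sE$, which immediately gives density of $\aA_\sE$ in $\aA$ because $\aA_0$ is dense. Indeed, for $a \in \aA_0$ the commutator $[D_0, a]$ is bounded, and Theorem~\ref{CommResult} applied with $\alpha = 1$ and $D = D_0$ yields
\begin{equation*}
  [F_0, a] = [D_0 \Delta_0^{-1}, a] \in \sE,
\end{equation*}
so $a \in \aA_\sE$.

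For the Lipschitz estimate, set $\alpha = 1$, so that $\Delta_{0,\alpha} = \Delta_0$ and $\Delta_\alpha = \Delta$. The hypothesis $\Delta_0^{-1} \in \sE$ and $\|D - D_0\| \leq 1$ are exactly what is required by Theorem~\ref{QcommResultWeakened}, which therefore gives
\begin{equation*}
  \|F - F_0\|_\sE = \|D \Delta^{-1} - D_0 \Delta_0^{-1}\|_\sE \leq c\, \|\Delta_0^{-1}\|_\sE\, \|D - D_0\|,
\end{equation*}
so setting $c(F_0) := c\, \|\Delta_0^{-1}\|_\sE$ completes the estimate~\eqref{CommResultCorlDisp}.

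There is no genuine obstacle here: the only point requiring any attention is the verification that $\aA_0 \subseteq \aA_\sE$, but this is exactly the content of the commutator estimate in Theorem~\ref{CommResult}. The strength of the corollary lies entirely in the earlier technical work on double operator integrals; once Theorems~\ref{CommResult} and~\ref{QcommResultWeakened} are available, the passage from the unbounded spectral triple to the bounded (pre-)Breuer-Fredholm module picture is a short formal argument.
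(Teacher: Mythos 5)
Your proposal is correct and matches the paper's own proof exactly: the paper likewise deduces $\aA_0\subseteq\aA_\sE$ from Theorem~\ref{CommResult} and obtains the Lipschitz bound~\eqref{CommResultCorlDisp} by invoking Theorem~\ref{QcommResultWeakened} with $\alpha=1$. You merely fill in the routine verification $1-F_0^2=\Delta_0^{-2}$, which the paper treats as implicit.
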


\begin{proof} A straightforward application of Theorem~\ref{CommResult} shows
that for an arbitrary~$\aA$, we have $\aA_0\subseteq \aA_\sE$. An
application of Theorem~\ref{QcommResultWeakened}
implies~\eqref{CommResultCorlDisp}.

\end{proof}

The first assertion of Corollary~\ref{CommResultCorl} was also
proved in~\cite{CaPhSu2000-MR1758245} (see also~\cite{Sukochev2000-MR1767406})
in the special case when~$\sE =
\sL^p$, $1 < p < \infty$
(see~\cite[Theorem~0.3.(i)]{CaPhSu2000-MR1758245}) or when~$\sE$
is an interpolation space for a couple~$(\sL^p, \sL^q)$, $1< p
\leq q < \infty$ (see~\cite[Corollary~0.5]{CaPhSu2000-MR1758245}).
However, the methods employed in~\cite{CaPhSu2000-MR1758245},
\cite{Sukochev2000-MR1767406} and~\cite[Sections~A
and~B]{CarPhi1998-MR1638603} do not extend to an arbitrary
operator space~$\sE$ and more importantly, they do not yield the
Lipschitz estimate~\eqref{CommResultCorlDisp}, but only H\"older
estimates.  We now show that our methods not only yield the
Lipschitz continuity of the mapping~$(\aM, D_0) \mapsto (\aM,
F_0)$ but also its differentiability.

\begin{thm}
  \label{DiffCorl}
  Let~$\{D_t\}_{t \in \Rl}$ be a collection of self-adjoint linear
  operators affiliated with~$\aM$.  If\/~$(1 + D_0^2)^{-\frac 12}$ belongs to~$\sE$
   and if\/~$t \mapsto
  D_t$ is an operator-norm differentiable path at the point~$t = 0$,
  i.e.\ $D_t - D_0 \in \aM$, $t \in \Rl$ and there is an operator~$G
  \in \aM$ such that
  \begin{equation}
    \label{DiffCorlHyp}
    \lim_{t \rightarrow 0} \left \| \frac {D_t - D_0}t - G \right\| = 0,
  \end{equation}
  then the path~$t \rightarrow F_t := D_t (1 + D^2_t)^{- \frac 12}$ is
  $\sE$-differentiable at the point~$t = 0$, i.e.\ $F_t - F_0 \in \sE$
  and there is an operator~$H \in \sE$ such that $$ \lim_{t
    \rightarrow 0} \left \| \frac {F_t - F_0}t - H \right\|_\sE = 0.
  $$ Moreover, $H= T_{\psi_f}(G)$, where~$T_{\psi_f} = T_{\psi_f}
  (D_0, D_0)$ and
  \begin{equation}
    \label{DiffCorlFunc}
    f(t) = \frac t {(1 + t^2)^{\frac 12}}.
  \end{equation}
\end{thm}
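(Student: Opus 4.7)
The plan is to apply the double operator integral machinery of Section~\ref{sec:DOI} to identify the derivative explicitly and then control the error. Since $f$ in~\eqref{DiffCorlFunc} satisfies the hypotheses of Theorem~\ref{HolderCriterion} with $\theta=0$ and $\epsilon=1$, we have $\psi_f \in \mathfrak{A}_0 \subseteq \Phi(\sL^\infty)$, and applying Theorem~\ref{DOIcommutator} with $a = \id$ yields
$$F_t - F_0 = T_{\psi_f}(D_t, D_0)(D_t - D_0).$$
This makes $H := T_{\psi_f}(D_0, D_0)(G)$ the natural candidate; the proof of Theorem~\ref{CommResult} (with $\alpha = 1$) shows $T_{\psi_f}(D_0, D_0) \in \Bd(\sL^\infty, \sE)$ and hence $H \in \sE$. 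Setting $V_t := (D_t - D_0)/t$ I split
$$\frac{F_t - F_0}{t} - H = T_{\psi_f}(D_t, D_0)(V_t - G) + \bigl[T_{\psi_f}(D_t, D_0) - T_{\psi_f}(D_0, D_0)\bigr](G)$$
and handle the two summands separately.

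For the first summand, tracing through the decomposition~\eqref{TalphaDecomp}--\eqref{psiNotRepr} with $\alpha=1$ but allowing the two base operators to differ, and applying Lemma~\ref{MaximumPrincCorollary} with $\theta = \tfrac12$ to the sandwich $\Delta_t^{-1/2}\, T_\psi(\cdot)\,\Delta_0^{-1/2}$ appearing in~\eqref{CommResultLastTmp}, yields
$$\|T_{\psi_f}(D_t, D_0)\|_{\Bd(\sL^\infty, \sE)} \leq c\,\|\Delta_t^{-1}\|_\sE^{1/2}\,\|\Delta_0^{-1}\|_\sE^{1/2}.$$
Theorem~\ref{InverseResultWeakened} keeps the right-hand side uniformly bounded for small $t$, and hypothesis~\eqref{DiffCorlHyp} forces $\|V_t - G\|\to 0$, so this summand tends to $0$ in $\sE$.

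The second summand is the main obstacle: I must show that $T_{\psi_f}(D_t, D_0)(G) \to T_{\psi_f}(D_0, D_0)(G)$ in $\sE$-norm as $t \to 0$. My strategy is to exploit the integral representation~\eqref{psiDecomp}, which combined with Lemma~\ref{ChangeOfVarsDOI} and Theorem~\ref{HomomorphismResult} recasts the inner piece in~\eqref{CommResultLastTmp} as the operator-norm convergent Bochner integral
$$T_\psi(\Delta_t, \Delta_0)(G) = \int_\Rl \hat g(s)\,\Delta_t^{is}\,G\,\Delta_0^{-is}\,ds,$$
with $g$ as in~\eqref{TheFirstGfunc}. Norm convergence $D_t \to D_0$ plus the continuous functional calculus force $\Delta_t^{is} \to \Delta_0^{is}$ in the strong operator topology pointwise in $s$, and the dominant $|\hat g(s)|\,\|G\|$ then gives strong operator convergence of $T_\psi(\Delta_t, \Delta_0)(G)$ to $T_\psi(\Delta_0, \Delta_0)(G)$ by dominated convergence. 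To lift this to $\sE$-norm convergence of the full expression I must contend with the outer factors $\Delta_t^{-1/2}$ and $\Delta_0^{-1/2}$: I would combine the $\sE$-norm convergence $\Delta_t^{-1} \to \Delta_0^{-1}$ supplied by Theorem~\ref{InverseResultWeakened} with interpolation via Lemma~\ref{MaximumPrincCorollary} to control the half-powers, using Balakrishnan's formula $\lambda^{-1/2} = \pi^{-1}\int_0^\infty s^{-1/2}(s+\lambda)^{-1}\,ds$, the resolvent identity, and the operator-Lipschitz bound $\|\Delta_t - \Delta_0\| \leq \|D_t - D_0\|$ (a consequence of operator monotonicity of $x \mapsto \sqrt{1+x^2}$ on self-adjoint perturbations). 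The delicate technical step is precisely this upgrade from strong operator convergence of the inner factor to $\sE$-norm convergence of the sandwich; the remaining correction terms in~\eqref{TalphaDecomp} (which carry extra bounded factors $D_t\Delta_t^{-1}$ and $D_0\Delta_0^{-1}$ of norm at most $1$) are handled by the same argument.
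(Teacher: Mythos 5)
Your outline is correct up to the point where you identify the second summand as the crux: the same identity $F_t - F_0 = T_{\psi_f}(D_t, D_0)(D_t - D_0)$, the same candidate $H = T_{\psi_f}(D_0,D_0)(G)$, and the same split into a ``remainder'' term controlled by uniform boundedness of $T_{\psi_f}(D_t,D_0)$ on $\Bd(\sL^\infty,\sE)$ and a ``stability'' term $[T_{\psi_f}(D_t,D_0)-T_{\psi_f}(D_0,D_0)](G)$. Where you diverge from the paper, and where a genuine gap appears, is in the treatment of that second term.

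You propose to prove the stability via strong operator convergence of $\int_\Rl \hat g(s)\,\Delta_t^{is}\,G\,\Delta_0^{-is}\,ds$ and then ``lift'' to $\sE$-norm. That lift does not go through as stated: strong operator convergence of the middle factor, combined with $\sE^{(2)}$-control of the outer factors $\Delta_t^{-1/2},\,\Delta_0^{-1/2}$, does not produce $\sE$-norm convergence of the sandwich. The mechanism that actually works (and the one the paper uses in Lemma~\ref{MaximumPrincCorollary}) requires the middle factor to converge in \emph{operator norm}: one then writes $\|\Delta_0^{-1/2}(T_t(G)-T_0(G))\Delta_0^{-1/2}\|_\sE \le \|\Delta_0^{-1}\|_\sE\,\|T_t(G)-T_0(G)\|$. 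The difficulty is that the elementary bound $\|\Delta_t^{is}-\Delta_0^{is}\|\le c(s)\|D_t-D_0\|$ has a constant that blows up as $|s|\to\infty$, so you cannot get uniform norm convergence directly from the Bochner integral over all of $\Rl$. The paper's key idea, which your proposal lacks, is to truncate: approximate $\phi'$ in $\|\cdot\|_{\mathfrak{A}_0}$ by $\phi''(\lambda,\mu)=\int_{|s|\le s_0}h(s)(1+\lambda^2)^{is/2}(1+\mu^2)^{-is/2}\,ds$ (estimate~\eqref{DiffCorlTmpII}), then on the bounded range $|s|\le s_0$ use Theorem~\ref{HolderCriterion} and Theorem~\ref{DOIcommutator} to get the \emph{norm} Lipschitz bound~\eqref{DiffCorlTmpIII} with a constant depending only on $s_0$, and finally absorb the tail $|s|>s_0$ into the $\epsilon$ coming from the $\mathfrak{A}_0$-approximation. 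Without this truncation step, the Balakrishnan/interpolation sketch you gesture at does not close the argument. A secondary omission: the paper also isolates and separately controls the $\sE^{(2)}$-convergence $\Delta_t^{-1/2}\to\Delta_0^{-1/2}$ (equation~\eqref{DiffCorlTempIII}), which is needed because the left outer factor depends on $t$; your two-term split leaves this implicit, and the corrections involving $\Delta_t^{-1}T'_t(\cdot)\Delta_0^{-1}$ and $D_t\Delta_t^{-1}T'_t(\cdot)D_0\Delta_0^{-1}$ inside $T_t$ must all be run through the same truncation argument, not dismissed as routine.
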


\begin{proof}
  We shall use the argument from the proof of Theorems~\ref{CommResult},
  \ref{QcommResultWeak} and~\ref{QcommResultWeakened} in the special
  case when~$\alpha = 1$ and~$\theta = \frac 12$.  We let~$c$ stand for
  a positive constant which may vary from line to line.  We
  set~$\Delta_t := (1 + D^2_t)^{ \frac 12}$.  We start again with the
  identity (see Theorem~\ref{DOIcommutator})
  \begin{equation}
    \label{DiffCorlTemp}
    F_t - F_0 = T_{\psi_f}(D_t - D_0),\ \ t \in \Rl,
  \end{equation}
  where~$T_{\psi_f} = T_{\psi_f}(D_t, D_0)$.  An inspection of the proof
  of Theorem~\ref{CommResult} (see formulae~\eqref{TalphaDecomp},
  \eqref{CommResultLastTmp} and~\eqref{psiDecomp}) shows that
  \begin{equation}
    \label{DiffCorlForCiOld}
    T_{\psi_f}(x) =
    \Delta_t^{- \frac 12} \, T_t(x)\, \Delta_0^{-\frac 12},\ \ x \in
    \sL^\infty,
  \end{equation}
  where
  \begin{equation}
    \label{DiffCorlTmp}
    T_t(x) = T'_t(x) +
    \Delta_t^{-1} \, T'_t(x)\, \Delta_0^{-1} - D_t \Delta_t^{-1}\,
    T'_t(x)\, D_0 \Delta_0^{-1},
  \end{equation}
  and where~$T'_t = T_{\phi'}(D_t, D_0)$, $t \in \Rl$,
  \begin{equation}
    \label{AnotDecomSpecial}
    \phi'(\lambda, \mu) = \frac
    {(1+\lambda^2)^{\frac 14} (1+\mu^2)^{\frac 14}}{(1+
      \lambda^2)^{\frac 12} + (1+ \mu^2)^{\frac 12}} = \int_{\Rl} h(s)\,
    (1+\lambda^2)^{\frac{is}2} \, (1 + \mu^2)^{-\frac {is}2}\, ds,
  \end{equation}
  for some~$h \in L^1(\Rl)$.  The last equality
  in~\eqref{AnotDecomSpecial} follows from Lemma~\ref{DOIbddCrCor}
  (a further inspection shows that~$h = \hat g$, where~$g$ is
  given~\eqref{TheFirstGfunc}).  It is clear
  from~\eqref{AnotDecomSpecial} and Corollary~\ref{AzeroClass} that the
  operator~$T'_t \in \Bd(\sL^\infty)$ uniformly for~$t \in \Rl$.  Thus,
  by~\eqref{DiffCorlTmp} the operator~$T_t \in \Bd(\sL^\infty)$
  uniformly for~$t \in \Rl$.

  Our first objective is to prove that
  \begin{equation}
    \label{DiffCorlTempI}
    \lim_{t \rightarrow 0} \|T_t(x) - T_0(x)\| =
    0,\ \ x \in \sL^\infty.
  \end{equation}
  However, instead of proving~\eqref{DiffCorlTempI} we shall show a
  stronger result.  Let~$\{D'_t\}_{t \in \Rl}$ be another collection
  of linear self-adjoint operators affiliated with~$\aM$ and let~$\bar
  T_{\psi_f} = T_{\psi_f}(D_t, D'_s)$, $t, s \in \Rl$.  If~$\bar
  T'_{t, s} = T_{\phi'}(D_t, D'_s)$, $t, s \in \Rl$, then, alongside
  with~\eqref{DiffCorlForCiOld} and~\eqref{DiffCorlTmp}, we have
  \begin{equation}
    \label{DiffCorlForCi}
    \bar T_{\psi_f} (x) = \Delta_t^{- \frac 12}\, \bar T_{t,s}(x)\,
    \Delta_s'^{-\frac 12},
  \end{equation}
  where
  \begin{equation}
    \label{DiffCorlTmpStronger}
    \bar T_{t, s}(x) = \bar T_{t, s}'(x) + \Delta_t^{-1}\, \bar T_{t,s}'(x)\,
    \Delta_s'^{-1} - D_t \Delta^{-1}_t \, \bar T_{t,s}'(x)\, D_s'
    \Delta_s'^{-1}
  \end{equation}
  and~$\Delta_t' = (1 + (D_t')^2)^{\frac 12}$.  We shall show
  \begin{equation}
    \label{DiffCorlTempIStronger}
    \lim_{t \rightarrow 0} \|\bar T_{t,t}(x) - \bar T_{0,t}(x)\| = 0,\ \ x \in
    \sL^\infty
  \end{equation}
  which in particular implies~\eqref{DiffCorlTempI}.  It obviously
  follows from~\eqref{DiffCorlHyp} that $$ \lim_{t \rightarrow 0}
  \|D_t - D_0\| = 0. $$ Consequently, it readily follows from
  Theorems~\ref{InverseResultWeakened} and~\ref{QcommResultWeakened}
  that
  \begin{equation}
    \label{DiffCorlForCiNext}
    \lim_{t \rightarrow 0} \|\Delta_t^{-1} - \Delta_0^{-1}\| =
    \lim_{t \rightarrow 0} \|D_t \Delta_t^{-1} - D_0 \Delta_0^{-1}\| = 0.
  \end{equation}
  Combining the latter with~\eqref{DiffCorlTmpStronger}, it is seen
  that to show~\eqref{DiffCorlTempIStronger}, we need only to prove that
  \begin{equation}
    \label{DiffCorlTmpI}
    \lim_{t \rightarrow 0} \|\bar T'_{t,t}(x) - \bar T'_{0,t}(x)\| = 0,\ \ x\in
    \sL^\infty.
  \end{equation}
  For the latter, fix~$\epsilon > 0$.  We also fix~$\delta > 0$ such
  that~$\|D_t - D_0\| < \epsilon$ for every~$|t| < \delta$.  It is
  clear that there is~$s_0 > 0$ and the function
  \begin{equation}
    \label{phiTwoPrimes}
    \phi''(\lambda,
    \mu) = \int_{|s| \leq s_0} h(s)\, (1 + \lambda^2)^{\frac {is}2}\, (1
    + \mu^2)^{-\frac {is}2}\, ds
  \end{equation}
  such that
  \begin{equation}
    \label{DiffCorlTmpII}
    \|\phi' - \phi''\|_{\mathfrak{A}_0} < \epsilon.
  \end{equation}
  Furthermore, for every fixed~$|s| \leq s_0$, the function $$ f_s(t)
  = (1 + t^2)^{\frac {is}2} $$ satisfies Theorem~\ref{HolderCriterion}
  with constants depending only on~$s_0$ and therefore, by
  Corollary~\ref{AzeroClass} and Theorem~\ref{DOIcommutator},
  \begin{equation}
    \label{DiffCorlTmpIII}
    \| \Delta_t^{is} - \Delta_0^{is} \| \leq c\, \|D_t - D_0\|,\ \ |s|
    \leq s_0.
  \end{equation}
  Let us show the identity
  \begin{align}
    \label{SeemsTheLastTmp}
    \bar T'_{t, t}(x) - \bar T'_{0, t}(x) = &\, (\bar T'_{t, t}(x) -
    T''_{t, t}(x)) + (T''_{0, t}(x) - \bar T'_{0, t}(x)) \cr + &\,
    \int_{|s|\leq s_0} h(s)\, (\Delta_t^{is} - \Delta_0^{is})\, x \,
    (\Delta_t')^{is}\, ds,
  \end{align}
  where~$T''_{t,s} = T_{\phi''}(D_t, D_s')$, $t, s \in \Rl$.  Fix~$x
  \in \sL^2 \cap \sE$ and~$y \in \sL^2 \cap \sEcross$.  We set $$
  d\nu_{t,s} = \tau(y\, dE_{t, \lambda}\, x\, dE'_{s, \mu}),\ \ t,s \in \Rl,
  $$ where~$dE_{t, \lambda}$ and~$dE'_{s, \mu}$ are the spectral
  measure of the operator~$D_t$ and~$D'_s$, respectively.
  By~\eqref{DOIdefFmla} we have
  \begin{align}
    \label{SeemsTheLastTmpI}
    \tau(y \bar T'_{t,t}(x)) - \tau(y \bar T'_{0,t}(x)) = &\, \int_{\Rl^2}
    \phi'\, d\nu_{t,t} - \int_{\Rl^2} \phi'\, d\nu_{0,t} \cr = &\, \int_{\Rl^2}
    (\phi' - \phi'') \, d \nu_{t,t} + \int_{\Rl^2} (\phi' - \phi'')\, d\nu_{0,t}
    \cr + &\, \int_{\Rl^2} \phi''\, d\nu_{t,t} - \int_{\Rl^2} \phi''\,
    d\nu_{0,t}.
  \end{align}
  Replacing~$\phi''$ with~\eqref{phiTwoPrimes} and Fubini's theorem
  yield for the last term
  \begin{multline}
    \label{SeemsTheLastTmpII}
    \int_{\Rl^2} \phi''\, d\nu_{t, t} - \int_{\Rl^2} \phi''\, d\nu_{0, t} \cr = \,
    \int_{|s|\leq s_0} h(s)\, ds\, \left[ \int_{\Rl^2} (1+
      \lambda^2)^{\frac {is}2} \, (1 + \mu^2)^{- \frac {is}2}\, d(\nu_{t, t}
      - \nu_{0, t}) \right] \cr = \, \int_{|s|\leq s_0} h(s)\, ds\, \left[
      \tau(y\, \Delta_t^{is} \, x \, (\Delta_t')^{-is}) - \tau(y\,
      \Delta_0^{is}\, x \, (\Delta_t')^{-is}) \right].
  \end{multline}
  Here, we used the spectral theorem as follows
  \begin{multline*}
    \int_{\Rl^2} (1 + \lambda^2)^{\frac{is}2} \, (1 + \mu^2)^{-
      \frac{is}2} \, d\nu_{t,s} \cr = \, \int_{\Rl} \int_{\Rl} (1 +
    \lambda^2)^{\frac{is}2}\, (1 + \mu^2)^{- \frac{is}2}\, \tau(y\,
    dE_{t, \lambda}\, x \, dE'_{s, \mu}) \cr = \, \tau \left[ y\,
      \int_{\Rl} (1 + \lambda^2)^{\frac{is}2}\, dE_{t, \lambda}\, x \,
      \int_{\Rl} (1 + \mu^2)^{-\frac{is}2}\, dE'_{s, \mu} \right] =
    \, \tau(y\, \Delta_t^{is}\, x \, (\Delta_s')^{-is}).
  \end{multline*}
  Hence, combining~\eqref{SeemsTheLastTmpII}
  with~\eqref{SeemsTheLastTmpI} yields~\eqref{SeemsTheLastTmp}.

  Estimating the first two terms in~\eqref{SeemsTheLastTmp}
  with~\eqref{DiffCorlTmpII} and the last one
  with~\eqref{DiffCorlTmpIII} yields $$
  \|\bar T'_{t, t}(x) - \bar T'_{0, t}(x)\|
  \leq \epsilon\, (2 + c\, \|h\|_{L^1})\, \|x\|, $$
  provided~$|t|<\delta$.  The latter finishes the proof
  of~\eqref{DiffCorlTmpI}.

  Our next objective is to prove that
  \begin{equation}
    \label{DiffCorlTempIII}
    \lim_{t \rightarrow 0} \left\| \Delta_t^{- \frac 12} -
      \Delta_0^{-\frac 12}\right\|_{\sE^{(2)}} = 0,
  \end{equation}
  where~$\sE^{(2)}$ is the $2$-convexification of~$\sE$ (see
  e.g.~\cite{LT-II}), i.e. $$ \sE^{(2)} := \{x \in \aMtilde:\ \ |x|^2
  \in \sE\} \ \ \text{and}\ \ \|x\|_{\sE^{(2)}} := \|\,
  |x|^2\|_{\sE}^{\frac 12}. $$ We have
  \begin{align*}
    \left\|\Delta_t^{- \frac 12} - \Delta^{- \frac
        12}_0\right\|_{\sE^{(2)}} \leq &\, \left \| \Delta_t^{-\frac 12}
    \right\|\, \left\| \Delta_0^{- \frac 12}\right\|_{\sE^{(2)}}\, \left
      \|\Delta_t^{\frac 12} - \Delta_0^{\frac 12} \right\| \cr \leq &\,
    \left\| \Delta_0^{-1}\right\|_{\sE}^{\frac 12}\, \left
      \|\Delta_t^{\frac 12} - \Delta_0^{\frac 12} \right\| \cr \leq &\,
    c \, \|\Delta_0^{-1}\|_\sE^{\frac 12}\, \| D_t - D_0 \|,\ \ c >
    0.
  \end{align*}
  Here, the first inequality follows from the simple observation that
  $$ \Delta_t^{- \frac 12} - \Delta_0^{-\frac 12} = - \Delta_t^{-
    \frac 12}\,(\Delta_t^{\frac 12} - \Delta_0^{\frac 12})\, \Delta_0^{-
    \frac 12}. $$ The last inequality follows from
  Theorem~\ref{DOIcommutator} and the fact that the function $$ h(t) =
  (1 + t^2)^{\frac 14} $$ satisfies Theorem~\ref{HolderCriterion}.

  Now we can finish the proof of the theorem.  We set~$H =
  T_{\psi_f}(G) = \Delta_0^{-\frac 12} T_0(G)\, \Delta_0^{- \frac
    12}$.  It follows from~\eqref{DiffCorlTemp}, that
  \begin{align*}
    \frac {F_t - F_0}t - H = &\, \Delta_t^{-\frac 12}\, T_t\left( \frac
      {D_t - D_0}t \right) \, \Delta_0^{- \frac 12} - \Delta_0^{-\frac
      12} \, T_0(G)\, \Delta_0^{-\frac 12} \cr = &\, (\Delta_t^{- \frac
      12} - \Delta_0^{-\frac 12})\, T_t\left ( \frac {D_t - D_0}t \right)\,
      \Delta_0^{-\frac 12} \cr + &\, \Delta_0^{-\frac 12}\, T_t \left(
        \frac {D_t - D_0}t - G \right) \, \Delta_0^{-\frac 12} \cr +
      &\, \Delta_0^{-\frac 12}\, \left( T_t(G) - T_0(G) \right)\,
      \Delta_0^{-\frac 12}.
  \end{align*}
  Note that the operators~$T_t$ and~$\frac {D_t - D_0}t$ are uniformly
  bounded for every~$t \in \Rl$ in the spaces~$\Bd(\sL^\infty)$
  and~$\sL^\infty$, respectively (see remarks
  following~\eqref{AnotDecomSpecial}).  Therefore, when~$t \rightarrow
  0$, the first term vanishes in~$\sE$ due to~\eqref{DiffCorlTempIII}
  and generalized H\"older inequality~$\|xy\|_{\sE} \leq
  \|x\|_{\sE^{(2)}}\, \|y\|_{\sE^{(2)}}$, $x, y \in \sE^{(2)}$; the
  second term vanishes in~$\sE$ due to~\eqref{DiffCorlHyp} and
  Lemma~\ref{MaximumPrincCorollary}; and the last one does the same
  thanks to~\eqref{DiffCorlTempI} and Lemma~\ref{MaximumPrincCorollary}.
  The theorem is proved.
\end{proof}

The result above is of importance for the spectral flow theory, for
which we refer to~\cite{CarPhi1998-MR1638603, CarPhi2004-MR2053481,
  BCPRSW}. In that theory, given an odd $\sE$-summable spectral triple
(respectively, bounded $\sE$-summable pre-Breuer-Fredholm
module,~$(\aM, F_0,\aA)$) one introduces an associated affine
space $\Phi_\sE:=\{D=D_0+A\ |\ A=A^*\in \aM\}$ (respectively,
$\mathfrak{M}_\sE:=\{F=F_0+A\ |\ A=A^*\in \sE\}$).  To compare the
spectral flow along paths of self-adjoint Breuer-Fredholm
operators in~$\mathfrak{M}_\sE$ and self-adjoint bounded operators
in~$\Phi_\sE$ it is important to know that the transformation from
spectral triples $(\aM, D_0,\aA)$ to bounded $\sE$-summable
modules~$(\aM, F_0,\aA)$ via the map~$F_D=D(1+D^2)^{-1/2}$ carries
$C^1$~paths to $C^1$~paths (see
e.g.~\cite[Section~6]{CarPhi2004-MR2053481}).  In concrete
examples, proving the smoothness of this map is difficult and such
a difficulty has led to extra technical assumptions imposed
in~\cite{CarPhi1998-MR1638603, CarPhi2004-MR2053481} on the
triple~$(\aM, D_0,\aA)$.  The result below removes all such
assumptions.

\begin{thm}
  \label{CiPATH}
  Let~$\{D_t\}_{t \in \Rl}$ be a collection of self-adjoint linear
  operators affiliated with~$\aM$ such that~$(1 + D_0^2)^{-\frac 12}$ belongs to~$\sE$.
  Let~$F_t := D_t\, (1 +
  D_t^2)^{-\frac 12}$, $t \in \Rl$.  If~$D_t - D_{t_0} \in \sL^\infty$,
  $t, t_0 \in\Rl$, the limit
  $$ \frac {d D_t}{dt}(t_0) := \|\cdot\| - \lim_{t \rightarrow t_0}
  \frac {D_t - D_{t_0}}{t - t_0},\ \ t_0 \in \Rl $$ exists and the
  mapping $t \mapsto \frac{d D_t}{dt}(t)$ is operator norm continuous,
  then~$F_t - F_{t_0} \in \sE$, the limit $$ \frac {d F_t}{dt}(t_0) :=
  \|\cdot\|_{\sE} - \lim_{t \rightarrow t_0} \frac {F_t - F_{t_0}}{t -
    t_0} $$ exists and the mapping~$t \rightarrow \frac{d F_t}{dt}(t)$
  is $\sE$-continuous.
\end{thm}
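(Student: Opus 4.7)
The plan is to bootstrap from Theorem~\ref{DiffCorl} (which gives pointwise $\sE$-differentiability) to the full $C^1$-statement by establishing $\sE$-continuity of the derivative. Throughout, fix $t_0\in\Rl$ and set $G_t := \frac{dD_t}{dt}(t)$; by hypothesis $G_t\in\aM$ and $t\mapsto G_t$ is $\|\cdot\|$-continuous. Let $f(t)=t(1+t^2)^{-1/2}$ and $\Delta_t := (1+D_t^2)^{1/2}$.

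First I would collect the membership and local uniformity statements. Since $D_t - D_{t_0} \in \aM$ and $\Delta_0^{-1}\in\sE$, Theorem~\ref{InverseResultWeakened} yields $\Delta_t^{-1}\in\sE$ for every $t$, with $\|\Delta_t^{-1}\|_\sE$ bounded on compact $t$-intervals. Corollary~\ref{CommResultCorl} (or directly Theorem~\ref{QcommResultWeakened}) then gives $F_t-F_{t_0}\in\sE$, so the first assertion is immediate. Moreover, Theorem~\ref{DiffCorl} applied at each $t_0$ produces the $\sE$-derivative $\frac{dF_t}{dt}(t_0)$ and identifies it as
\[
   \frac{dF_t}{dt}(t_0) \;=\; T_{\psi_f}(D_{t_0},D_{t_0})(G_{t_0}).
\]
It remains only to show that $t\mapsto T_{\psi_f}(D_t,D_t)(G_t)$ is $\sE$-continuous at $t_0$.

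For this, I would split
\[
  T_{\psi_f}(D_t,D_t)(G_t)-T_{\psi_f}(D_{t_0},D_{t_0})(G_{t_0})
  \;=\; \mathrm{(I)} + \mathrm{(II)},
\]
with $\mathrm{(I)} := T_{\psi_f}(D_t,D_t)(G_t-G_{t_0})$ and $\mathrm{(II)} := \bigl[T_{\psi_f}(D_t,D_t)-T_{\psi_f}(D_{t_0},D_{t_0})\bigr](G_{t_0})$. For $\mathrm{(I)}$, I would recycle the factorisation from the proof of Theorem~\ref{DiffCorl}: using \eqref{DiffCorlForCiOld}--\eqref{AnotDecomSpecial} (with both spectral variables being $D_t$), one writes $T_{\psi_f}(D_t,D_t)(x)=\Delta_t^{-1/2}T_t(x)\Delta_t^{-1/2}$ with $T_t\in\Bd(\sL^\infty)$ uniformly in $t$. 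Combining Lemma~\ref{MaximumPrincCorollary} with the $\sE^{(2)}$-boundedness of $\Delta_t^{-1/2}$ (uniform on compacts, again from Theorem~\ref{InverseResultWeakened}), the term $\mathrm{(I)}$ is controlled in $\sE$ by $c\,\|\Delta_t^{-1}\|_\sE\,\|G_t-G_{t_0}\|$, which tends to $0$ by the continuity hypothesis on $G_t$.

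The hard part is term $\mathrm{(II)}$. Here I would mimic the approximation argument from \eqref{phiTwoPrimes}--\eqref{DiffCorlTmpIII}: truncate the Fourier-type representation of $\phi'$ to $|s|\le s_0$ to obtain $\phi''$ with $\|\phi'-\phi''\|_{\mathfrak{A}_0}<\varepsilon$, then express $T_{\phi''}(D_t,D_t)(G_{t_0})$ explicitly as $\int_{|s|\le s_0}h(s)\Delta_t^{is}G_{t_0}\Delta_t^{-is}\,ds$. Using Theorems~\ref{HolderCriterion} and~\ref{DOIcommutator} applied to $t\mapsto(1+t^2)^{is/2}$ gives $\|\Delta_t^{is}-\Delta_{t_0}^{is}\|\le c(s_0)\|D_t-D_{t_0}\|\to 0$, which handles the truncated part in operator norm; the tail is uniformly small in $\Bd(\sL^\infty)$ by Corollary~\ref{AzeroClass}. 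Together with the $\sE^{(2)}$-continuity $\Delta_t^{-1/2}\to\Delta_{t_0}^{-1/2}$ (obtained exactly as in \eqref{DiffCorlTempIII} via the identity $\Delta_t^{-1/2}-\Delta_{t_0}^{-1/2}=-\Delta_t^{-1/2}(\Delta_t^{1/2}-\Delta_{t_0}^{1/2})\Delta_{t_0}^{-1/2}$ and Theorem~\ref{DOIcommutator} for $h(u)=(1+u^2)^{1/4}$), this converts the operator-norm convergence on the ``inside'' to $\sE$-convergence of the full sandwiched expression via the generalised Hölder inequality $\|xy\|_\sE\le\|x\|_{\sE^{(2)}}\|y\|_{\sE^{(2)}}$. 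The main obstacle is coordinating these three continuities (the $\sL^\infty$-continuity of $\Delta_t^{is}$, the $\sE^{(2)}$-continuity of $\Delta_t^{-1/2}$, and uniformity of the double-operator-integral bounds) so that the cumulative error in $\sE$-norm is $O(\varepsilon)$; once that bookkeeping is done, $\mathrm{(II)}\to0$ in $\sE$ as $t\to t_0$, completing the proof.
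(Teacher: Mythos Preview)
Your proposal is correct and follows essentially the same route as the paper: invoke Theorem~\ref{DiffCorl} for pointwise differentiability, then prove $\sE$-continuity of $t\mapsto T_{\psi_f}(D_t,D_t)(G_t)$ via the factorisation $\Delta_t^{-1/2}T_t(\cdot)\Delta_t^{-1/2}$, the truncation argument \eqref{phiTwoPrimes}--\eqref{DiffCorlTmpIII} for $T_{\phi'}$, the $\sE^{(2)}$-continuity \eqref{DiffCorlTempIII}, and the generalised H\"older inequality. The only cosmetic difference is that the paper inserts the off-diagonal intermediate $T_{\psi_f}(D_s,D_0)$ to obtain a three-term split (so that \eqref{DiffCorlTempIStronger} can be quoted verbatim, one variable moving at a time), whereas you keep a two-term split and vary both spectral variables simultaneously in term~(II); this costs one extra add-and-subtract when handling the sandwich $\Delta_t^{-1/2}(\cdot)\Delta_t^{-1/2}$ but uses exactly the same estimates.
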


\begin{proof}
  The existence of the derivative~$\frac{d F_t}{dt}$ follows from
  Theorem~\ref{DiffCorl}.  We need only to show that the
  derivative~$\frac {dF_t}{dt}$ is $\sE$-continuous.  Clearly, it is
  sufficient to show the continuity at~$t = 0$, i.e.\ we need to show
  that $$
  \lim_{s \rightarrow 0} \left \| \frac{dF_t}{dt}(s) -
    \frac{dF_t}{dt}(0) \right\|_\sE = 0. $$
  It follows from
  Theorem~\ref{DiffCorl} that $$
  \frac{dF_t}{dt} (s) = T_{\psi_f}(D_s,
  D_s)\, \frac{dD_t}{dt} (s),\ \ s \in \Rl, $$
  where~$f$
  from~\eqref{DiffCorlFunc}.  Consequently, we have
  \begin{align}
    \label{CiPATHtmp}
    \frac{dF_t}{dt}(s) - \frac{dF_t}{dt}(0) = &\, T_{\psi_f}(D_s, D_s)\,
    \frac{dD_t}{dt}(s) - T_{\psi_f}(D_0, D_0)\, \frac{dD_t}{dt}(0) \cr =
    &\, T_{\psi_f}(D_s, D_s) \left[ \frac{dD_t}{dt}(s) -
    \frac{dD_t}{dt}(0) \right] \cr + &\, T_{\psi_f}(D_s, D_s)\,
    \frac{dD_t}{dt}(0) - T_{\psi_f}(D_s, D_0)\, \frac{dD_t}{dt} (0) \cr
    + &\, T_{\psi_f}(D_s, D_0)\, \frac{dD_t}{dt}(0) - T_{\psi_f}(D_0,
    D_0)\, \frac{dD_t}{dt} (0).
  \end{align}
  By~\eqref{DiffCorlForCiNext} and Theorem~\ref{CommResult} the
  operator~$T_{\psi_f}(D_s, D_s)$ is bounded in~$\Bd(\sL^\infty, \sE)$
  and there are constants~$c > 0$ and~$\delta > 0$, such that $$
  \|T_{\psi_f}(D_s, D_s)\|_{\Bd(\sL^\infty, \sE)} \leq c,\ \ |s| <
  \delta. $$ Thus, the first term in~\eqref{CiPATHtmp} vanishes
  in~$\sE$ as~$t \rightarrow 0$ since~$\frac{dD_t}{dt}$ is operator
  norm continuous.  On the other hand, the last two terms
  in~\eqref{CiPATHtmp} vanish in~$\sE$ when~$t \rightarrow 0$
  thanks to~\eqref{DiffCorlForCi}, \eqref{DiffCorlTempIStronger},
  \eqref{DiffCorlForCiNext}, and~\eqref{DiffCorlTempIII}.
\end{proof}

{\baselineskip=0.7\baselineskip \small
  \let\Large=\normalsize \bibliography{references}}

\end{document}